\def\drawvertices{\tikz@path@overlay{node}}
        \pgfmathsetmacro\halfcircleradius{\circleradius/2}
        \foreach \circlecolor [count=\x] in {#1}{
            \node[draw,circle, inner sep=\vertexradius,black, fill=\circlecolor] at (vertex set.corner \x) {};
            \pgfmathparse{\shiftangle-360*(\x-1)/ \vertices}
            \node at ($(vertex set)+(\pgfmathresult:\halfcircleradius)$){};
        }
\newtheorem{theorem}{Theorem}
\newtheorem{lemma}{Lemma}
\newtheorem{corollary}{Corollary} [theorem]
\newtheorem{property}{Property}
\newcommand{\qed}{\ifhmode\unskip\nobreak\fi\ifmmode\ifinner
\else\hskip5 pt\fi\fi \hbox{\hskip5 pt
\vrule width4 pt  height6 pt  depth1.5 pt \hskip 1pt }}
\newenvironment{myalgorithm}
{\par\indent\vbox\bgroup\small\kern -1\baselineskip
\begin{tabbing}
\ \ \ \ \ \=\ \ \ \ \=\ \ \ \ \=\ \ \ \ \=
\ \ \ \ \=\ \ \ \ \=\ \ \ \ \=\ \ \ \ \=\ \ \ \ \=
\ \ \ \ \=\ \ \ \ \=\ \ \ \ \=\ \ \ \ \=\\}
{\end{tabbing}
\kern -1\baselineskip\egroup}
\newcommand{\Algoritmo}{{\bfseries Algorithm }}
\newcommand{\Procedimento}{{\bfseries procedure }}
\newcommand{\Se}{{\bfseries if }}
\newcommand{\entao}{{\bfseries then }}
\newcommand{\senao}{{\bfseries else }}
\newcommand{\Para}{{\bfseries for }}
\newcommand{\faca}{{\bfseries do}}
\newcommand{\Inicio}{{\bfseries begin}}
\newcommand{\Fim}{{\bfseries end}}
\newcommand{\shrink}{\kern -0.5\baselineskip}
\title{Linear time determination of the scattering number for  strictly chordal graphs}
\author{Lilian Markenzon \footnote{Partially supported by grant 304706/2017-5, CNPq, Brazil.}\\ 
       NCE -  Universidade Federal do Rio de Janeiro\\
                     markenzon@nce.ufrj.br
\and
            Christina F. E. M. Waga\\
            IME - Universidade do Estado do Rio de Janeiro   \\
            waga@ime.uerj.br           
}
\date{\ }
\begin{document}
\maketitle

\begin{abstract}

The scattering number of a graph $G$ was defined by  Jung in 1978 as  
$sc(G) = max \{  \omega(G - S) - |S|, S \subseteq V,  \omega(G - S) \neq1\}$  
 where  $\omega(G - S) $ is  the number of connected components  of the graph $G-S$. 
 It is a measure of vulnerability of a graph and it has a direct relationship with the toughness of a graph. 
 Strictly chordal graphs, also known as  block duplicate graphs, are a subclass of chordal graphs
  that includes  block and 3-leaf power graphs. 
 In this paper we present a  linear time solution for the determination of 
the scattering number and    scattering set of   strictly chordal graphs. 
We show that, although the knowledge of the toughness of the class is helpful, it is not sufficient to provide an immediate result for determining the scattering number.

 \end{abstract}

Keywords:
strictly chordal graph;  scattering number;   toughness; minimal vertex separator.

\section{Introduction}\label{section:introd}

Vulnerability in graphs  is mainly related to the study of a graph when some of its elements are removed. 
There are many well known measures of  vulnerability based on subsets of vertices. 
Some are more usual as connectivity,  domination number, domatic number and  independence number; another ones are more recent as toughness \cite{Ch73}, binding number \cite{W73}, scattering number  \cite{Ju78},   
integrity \cite{BES87}, tenacity \cite{CMS95} and rupture degree \cite{LZL05}.  
Relationships between these invariants were studied  in  \cite{BES87} and  \cite{ZP04}. 

Toughness and  scattering number remind vertex connectivity since both consider the cardinality of a separator
 but they also take into account the  number of 
 remaining connected components after its removal.  
The toughness of a graph  was 
introduced by  Chv\' atal in 1973 \cite{Ch73}.
A graph $G=(V,E)$ is $t$-tough if
 $|S| \geq  t \,\omega(G-S)$ for every subset $S\subseteq V$  with  $\omega(G-S) > 1$. 
The \textit{toughness} of $G$, denoted $\tau(G)$, is the maximum value of $t$ for which $G$ is $t$-tough, taking $\tau(K_n) = \infty$,  $n \geq  1$. 
Therefore, if  $G$ is not complete, $\tau(G)= min\bigg\{ \frac{|S|}{\omega(G-S)}  \bigg\}$
where the minimum is taken over all separators  $S$  of vertices in $G$   \cite{B06}.
Subset $S$ for which  this minimum is attained is called a \emph{tough set}.
 The \emph{scattering number} of  a graph $G$ was defined by Jung  in 1978 \cite{Ju78}   
 as $sc(G) = max \{  \omega(G - S) - |S|, S \subseteq V \textrm{ and } \omega(G - S)\neq 1\}$.
 Subset $S$ for which  this maximum is attained is called a \emph{scattering set}. 
In both definitions, $\omega(G - S) $ is  the number of connected components  of the graph $G-S$. 
 Kratsch {\em et al.}  \cite{KKM94} related toughness and scattering number as follows. 

 \begin{lemma} {\rm \cite{KKM94}}  \label{lem:touscat} 
 For every graph $G$ holds $\tau(G) \geq 1$ if and only if $sc(G) \leq 0$. 
 \end{lemma}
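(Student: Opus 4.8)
The plan is to reduce both sides of the biconditional to one and the same inequality, quantified over the same family of vertex sets, so that the claim becomes a matter of unwinding definitions rather than a genuinely two-directional argument. First I would dispose of the degenerate case $G = K_n$: here $\tau(G) = \infty \geq 1$ by convention, while removing any $S \subsetneq V$ leaves a smaller complete graph, so $\omega(G-S) = 1$ for $S \neq V$ and $\omega(G-V) = 0$. Thus the only admissible $S$ in the definition of the scattering number is $S = V$, giving $sc(K_n) = -n \leq 0$. Both sides therefore hold, and the biconditional is trivially true for complete graphs.

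For non-complete $G$ I would rewrite each side as a universally quantified inequality. By the displayed formula for toughness, $\tau(G) = \min\{|S|/\omega(G-S)\}$ taken over all separators $S$, that is, over all $S$ with $\omega(G-S) > 1$; hence $\tau(G) \geq 1$ is equivalent to $|S| \geq \omega(G-S)$, i.e.\ $\omega(G-S) - |S| \leq 0$, for every $S$ with $\omega(G-S) \geq 2$. On the other side, $sc(G) \leq 0$ means exactly that $\omega(G-S) - |S| \leq 0$ for every $S$ with $\omega(G-S) \neq 1$.

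The only point requiring care, and the single mild obstacle, is matching the two ranges of quantification: toughness ranges over $\omega(G-S) \geq 2$, whereas the scattering number ranges over $\omega(G-S) \neq 1$, the latter additionally admitting $\omega(G-S) = 0$. But $\omega(G-S) = 0$ forces $S = V$, and then $\omega(G-S) - |S| = -|V| < 0$, so that case satisfies the inequality automatically and may be dropped from the scattering quantifier without altering its truth value. Consequently both $\tau(G) \geq 1$ and $sc(G) \leq 0$ are equivalent to the identical statement ``$\omega(G-S) - |S| \leq 0$ for every $S \subseteq V$ with $\omega(G-S) \geq 2$'', and the equivalence follows at once. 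I expect no deeper difficulty: the entire content lies in aligning the admissible separators in the two definitions and in observing that the $\omega(G-S) = 0$ contribution is vacuous.
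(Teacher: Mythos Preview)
Your argument is correct. Note, however, that the paper does not supply its own proof of this lemma: it is quoted verbatim from \cite{KKM94} and used as a black box, so there is no in-paper proof to compare against. Your write-up is exactly the kind of definition-unwinding one expects for this statement; the only nontrivial observation is the one you identify, namely that the ranges of $S$ in the two definitions differ only by the case $\omega(G-S)=0$, which forces $S=V$ and contributes $-|V|\le 0$ harmlessly. The treatment of $K_n$ is also fine, with the minor remark that some authors leave $sc(K_n)$ undefined rather than evaluating it as $-n$; either convention makes the biconditional hold trivially in the complete case.
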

Furthermore,  they presented an important result which  provides an algorithmic approach 
to  computing the scattering number. 

\begin{theorem} {\rm \cite{KKM94}}  \label{theo:scformula}
Let $G=(V,E)$ be a graph which is not complete. Then 
$$sc(G) = max_{_{S}} \Bigg\{  \sum_{i=1}^{k} max \{ sc ( G\left[ C_i \right] )  ,1 \} - |S|    \Bigg\}$$
where the maximum is taken over all minimal separators $S$ of the graph $G$ and $C_1, \dots, C_k$
 are the connected components of $G\left[ V\setminus S \right]$.
\end{theorem}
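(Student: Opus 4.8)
The plan is to establish the claimed identity by proving the two inequalities separately. Throughout, for a minimal separator $S$ of $G$ with $C_1,\dots,C_k$ the components of $G[V\setminus S]$, write $R(S)=\sum_{i=1}^{k}\max\{sc(G[C_i]),1\}-|S|$, so that the right-hand side is exactly $\max_S R(S)$.

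For the inequality $sc(G)\ge\max_S R(S)$ I would fix an arbitrary minimal separator $S$ and construct an admissible set $T$ realizing $R(S)$. For each component $C_i$, if $sc(G[C_i])\ge 1$ let $S_i\subseteq C_i$ be a scattering set of $G[C_i]$, so that $\omega(G[C_i]-S_i)=sc(G[C_i])+|S_i|$ with $\omega(G[C_i]-S_i)\neq 1$; otherwise set $S_i=\emptyset$, so that $\omega(G[C_i]-S_i)=1$. Put $T=S\cup\bigcup_i S_i$. Since $S\subseteq T$ separates the $C_i$ from one another, no edges run between distinct components after removing $T$, whence $\omega(G-T)=\sum_i\omega(G[C_i]-S_i)$ and $|T|=|S|+\sum_i|S_i|$. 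In both cases $\omega(G[C_i]-S_i)=\max\{sc(G[C_i]),1\}+|S_i|$, and substituting gives $\omega(G-T)-|T|=\sum_i\max\{sc(G[C_i]),1\}-|S|=R(S)$. Because each summand is at least $1$ and $k\ge 2$, we have $\omega(G-T)\ge 2\neq 1$, so $T$ is admissible in the definition of $sc(G)$ and $sc(G)\ge R(S)$; maximizing over $S$ yields the first inequality.

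For the reverse inequality $sc(G)\le\max_S R(S)$ I would start from a scattering set $S^*$ with $sc(G)=\omega(G-S^*)-|S^*|$. Since $G$ is not complete the maximum is attained with $\omega(G-S^*)\ge 2$, so $S^*$ is a genuine separator. The key step is to extract a minimal separator $S\subseteq S^*$: choosing vertices $a,b$ in two distinct components of $G-S^*$, the set $S^*$ is an $a$--$b$ separator, and every $a$--$b$ separator contains a minimal $a$--$b$ separator $S$, which is a minimal separator of $G$. Let $C_1,\dots,C_k$ be the components of $G[V\setminus S]$ and set $S_i^*=(S^*\setminus S)\cap C_i$. Because $S\subseteq S^*$, deleting $S^*$ first cuts $G$ into the $C_i$ and then removes $S_i^*$ inside each, so $\omega(G-S^*)=\sum_i\omega(G[C_i]-S_i^*)$ and $|S^*|=|S|+\sum_i|S_i^*|$. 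Hence $sc(G)=\sum_i\bigl(\omega(G[C_i]-S_i^*)-|S_i^*|\bigr)-|S|$. For each $i$, if $\omega(G[C_i]-S_i^*)\ge 2$ then $S_i^*$ is admissible for $G[C_i]$ and $\omega(G[C_i]-S_i^*)-|S_i^*|\le sc(G[C_i])$, while if $\omega(G[C_i]-S_i^*)\in\{0,1\}$ then $\omega(G[C_i]-S_i^*)-|S_i^*|\le 1$; in every case the $i$-th term is at most $\max\{sc(G[C_i]),1\}$. Summing, $sc(G)\le R(S)\le\max_{S'}R(S')$, which completes the argument.

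The main obstacle is precisely the extraction of a minimal separator inside the scattering set in the second inequality; I would isolate this as the structural fact that any separator of $G$ contains a minimal separator, realized as a minimal $a$--$b$ separator for a separated pair. This is what allows the possibly nonminimal excess $S^*\setminus S$ to be redistributed among the components $C_i$ so that the recursion closes. The remaining care lies in the degenerate cases: a complete component $C_i$ has $sc(G[C_i])<1$, so its term correctly collapses to $\max\{\cdot,1\}=1$, and the possibilities $\omega(G[C_i]-S_i^*)\in\{0,1\}$ must be handled directly since they fall outside the admissibility condition $\omega\neq 1$ in the definition of the scattering number.
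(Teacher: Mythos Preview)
The paper does not supply a proof of this theorem; it is quoted as a known result from \cite{KKM94}, so there is no in-paper argument to compare against. Your two-inequality strategy is the natural one and is essentially correct: building $T=S\cup\bigcup_i S_i$ for the lower bound and redistributing $S^*\setminus S$ among the components for the upper bound are exactly the right moves, and your handling of the degenerate cases $\omega(G[C_i]-S_i^*)\in\{0,1\}$ is clean.

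There is one slip. In the upper bound you pass from the scattering set $S^*$ to a minimal $a$--$b$ separator $S\subseteq S^*$ and assert that $S$ ``is a minimal separator of $G$.'' With the paper's terminology these are different notions: a \emph{minimal separator} has no proper subset that disconnects $G$, while a minimal $uv$-separator is only required to be minimal for the fixed pair $u,v$; the paper explicitly remarks that a minimal vertex separator need not be a minimal separator. Concretely, take the graph on $\{a,b,c,d,e\}$ consisting of the two triangles $abc$, $bcd$ together with a pendant edge $be$: the set $\{b,c\}$ is a minimal $a$--$d$ separator, yet $\{b\}\subsetneq\{b,c\}$ already disconnects $G$, so $\{b,c\}$ is not a minimal separator. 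The repair is immediate and does not disturb the rest of your proof: instead of invoking a minimal $a$--$b$ separator, simply choose $S$ to be an inclusion-minimal subset of $S^*$ that still disconnects $G$; such an $S$ is a minimal separator by definition, and since $S\subseteq S^*$ your decomposition $S_i^*=(S^*\setminus S)\cap C_i$ and the ensuing bounds go through verbatim.
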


For general graphs,
the corresponding computational problems to determine these invariants   are in NP \cite{BHS90, ZLH02}.
Few classes of graphs present polynomial results when dealing with the determination of the  toughness:  
interval graphs and trapezoid graphs \cite{KKM97}, cocomparability graph  with  $\tau(G) \geq 1$ \cite{DKS97} and  claw-free graphs,
 split graphs and $2K_2$-free graphs \cite{B15}.   Markenzon and Waga \cite{MW19}  presented a linear time determination of the  toughness of strictly chordal graphs. 
The scattering number of  interval graphs  \cite{BFGKPP15}, grid graphs and cartesian product of two complete graphs \cite{ZLH02} and gear graphs   \cite{AK11}  can be solved in linear time; however,  trapezoid graphs maintain the polynomial result to the scattering number  \cite{KKM97}. 
Observe that this invariant has some variations: edge scattering number \cite{A11}  opts for  $S$ as a  subset of edges of the graph;   isolated scattering number  \cite{Wetall11,LYS17} focus on  components that are isolated vertices of $G-S$ and 
weighted scattering number \cite{LZB19} takes into account the importance of the vertices in the graph establishing weight for them. 

The  {\em block duplicate graphs}, a subclass of ptolemaic graphs,  were introduced by  Golumbic and Peled \cite{GP02}. 
The class was also  defined as {\em strictly chordal graphs} by Kennedy  in \cite{K05} based on hypergraph properties and
it was proved to be gem-free and  dart-free \cite{GP02,K05}.
Brandst\"adt and  Wagner \cite{BW10}  showed  that the class is  the same as the  $(4,6)$-leaf power graphs.  
Strictly chordal graphs  can also be characterized in terms of the structure of their separators \cite{MW15}. 
Some known  subclasses of strictly chordal graphs are:  block graphs \cite{H63}, AC-graphs \cite{B93},
  3-leaf power graphs \cite{DGHN04,BL10}, strictly interval graphs \cite{MW16} and generalized core-satellite graphs \cite{EB17}. 

In his seminal paper, Jung stated that ``the scattering number is in a certain sense the {\em additive dual} for
the concept of toughness''.
Hence, as the toughness of a strictly chordal graph is already known, 
we study  the determination of 
its scattering number exploring this relationship. 
However, although the determination of the toughness is quite simple,
our proposed  task was not straightforward.
Firstly we show that, for $\tau(G) \geq 1$, the determination of the scattering number is quite
similar to the determination of the toughness;
the scattering set is also composed by a sole minimal vertex separator of $G$ 
but a tough set is seldom a scattering set.
For $\tau(G) < 1$, we need to establish a further partition of the set of graphs. 
After setting apart the graphs for which the scattering number is equal to one (we call them \emph{type A} graphs)
  for the remaining graphs (\emph{type B} graphs) a more algorithmic approach  is required.
For them, a scattering set must be build  in order to determine their scattering number;
this set can be  composed by one or more minimal vertex separators of $G$.
In all cases our solution has  linear time complexity.


\section{Background}\textbf{}

Basic concepts about chordal graphs (graphs possessing no chordless cycles)
   are assumed to be known and 
can be found in  Blair and Peyton \cite{BP93} and Golumbic \cite{Go04}.  
In this section, the most pertinent concepts are reviewed.

Let $G=(V,E)$  be a connected graph, 
where $|E|=m$  and
 $|V| = n$. 
The {\em neighborhood\/} of a vertex $v \in V$ is denoted by
$N(v) = \{ w \in V; \{v,w\} \in E\}$ and its {\em closed neighborhood} by  $N[v] = N(v)\cup \{v\} $.
Two vertices $u$ and $v$   are  {\em true twins} in $G$  if $N[u] = N [v]$.
A vertex $v$ is said to be {\em
simplicial\/} in $G$ when $N(v)$ is a {\em clique\/} in $G$.
For any $H \subseteq V$, 
  the subgraph of $G$ induced by $H$ is denoted $G[H]$. 

Let $G = (V, E)$ be a chordal graph and $u,v  \in V$. A subset $S \subset V$ is
a {\em separator} of $G$ if at least two vertices in the same connected
component of $G$ are in two distinct connected components of
$G[V\setminus S]$; 
$S$ is a {\em minimal separator} of $G$ if $S$
is a separator and no proper subset of $S$ separates the graph.
A subset $S \subset V$  is a {\em vertex separator}  for non-adjacent
vertices $u$  and $v$  (a {\em $uv$-separator}) if the removal of $S$ from
the graph separates $u$ and $v$  into distinct connected components.
If no proper subset of $S$  is a $uv$-separator then $S$  is a {\em
minimal $uv$-separator}. 
If $S$ is a minimal $uv$-separator for some pair of vertices,   it is called a
{\em minimal vertex separator} (\textit{mvs}). 
A minimal separator is always a minimal vertex separator
but the converse is not true.

A {\em clique-tree} of $G$ is defined as a tree  $T=(\mathbb{Q}, E_T)$,  where $\mathbb{Q}$ 
is the set of maximal cliques of $G$ 
and for every  two distinct  maximal cliques $Q, Q^\prime \in \mathbb{Q}$
each clique in the path from $Q$ to $Q^\prime$ in $T$ contains $Q\cap Q^\prime$. 
Observe that a set $S\subset V$ is a minimal vertex separator of $G$ if
and only if $S= Q\cap Q' $ for some edge $\{Q, Q'\}\in E_T$. 
Moreover, the multiset  ${\mathbb M}$ of
the minimal vertex separators of $G$ is the same for every
clique-tree of $G$.
The {\em multiplicity} of the minimal vertex separator $S$, denoted by
$\mu(S)$, is the number of times that $S$ appears in  ${\mathbb M}$. 
The set of minimal vertex separators  of $G$ is denoted by $\mathbb{S}$.
The determination of the minimal vertex separators and their multiplicities 
can be performed in linear time  \cite{MP10}. 

It is  important to mention two types  of cliques in a chordal graph $G$. 
A   {\em simplicial clique} is a maximal clique containing at least one simplicial vertex.
A simplicial clique $Q$ is called a {\em boundary clique} if there exists a maximal clique $Q^\prime$
such that  $Q \cap Q^\prime $ is the set of non-simplicial vertices of $Q$. 

A {\em strictly chordal graph} is a graph obtained by adding zero or more true twins 
to each vertex of a block graph $G$  \cite{GP02}.
The class was 
proved to be (gem,dart)-free \cite{GP02, K05}.
Strictly chordal graphs  can also be characterized in terms of the structure of their minimal vertex separators as proved in Theorem \ref{theo:caract2}. 

\begin{theorem}\label{theo:caract2} {\rm \cite{MW19} }
Let  $G=(V,E)$ be a chordal graph and $\mathbb S$ be the set of minimal vertex separators of $G$.
$G$ is a strictly chordal graph if and only if for any distinct $S, S^{\prime} \in {\mathbb S}$, $S \cap S^{\prime}= \emptyset$.
\end{theorem}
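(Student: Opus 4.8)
The plan is to prove both implications through the clique-tree description of minimal vertex separators recalled above, namely that $S$ is an \textit{mvs} of a chordal graph exactly when $S = Q\cap Q'$ for some edge $\{Q,Q'\}$ of a clique-tree $T$, together with the standard fact that the maximal cliques containing a fixed vertex induce a subtree of $T$.

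For the forward implication I would start from the definition of a strictly chordal graph as a block graph $B$ to which true twins have been added. In $B$ every block is a clique, so two adjacent maximal cliques meet in a single cut vertex and every \textit{mvs} of $B$ is a singleton. Expanding each vertex $v$ of $B$ into its class of true twins $T_v$ turns a clique-tree of $B$ into one of $G$ of identical shape, so every edge separator of $G$ is the full twin class $T_v$ of the corresponding cut vertex. Distinct minimal vertex separators then arise from distinct cut vertices, and since the twin classes partition $V$ they are pairwise disjoint.

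For the converse the core is a lemma: if the minimal vertex separators of $G$ are pairwise disjoint, then every \textit{mvs} $S$ is a complete class of true twins. To prove it I would take $u,v\in S = Q\cap Q'$ and suppose, for contradiction, that $u$ lies in some maximal clique $R$ with $v\notin R$. Walking along the clique-tree path from $R$ to $Q$, the vertex $v$ first appears across some edge $\{R_1,R_2\}$ with $v\notin R_1$; since the cliques containing $u$ form a subtree containing both $R$ and $Q$, the vertex $u$ lies in every clique of this path, so $u\in R_1\cap R_2$. Then $R_1\cap R_2$ is a minimal vertex separator containing $u$ but not $v$, hence distinct from $S$ yet meeting it in $u$ --- contradicting disjointness. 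Consequently $u$ and $v$ belong to exactly the same maximal cliques and are true twins, and the same reasoning shows that any true twin of a vertex of $S$ already lies in $Q\cap Q' = S$, so $S$ is a full twin class.

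With the lemma in hand I would finish by collapsing. Let $G^{\ast}$ be the quotient of $G$ by the true-twin relation; since twins share all maximal cliques, a clique-tree of $G$ descends to a clique-tree of $G^{\ast}$ of identical shape, and each edge separator $Q\cap Q'$ --- being a full twin class --- collapses to a single vertex. Thus every \textit{mvs} of $G^{\ast}$ is a singleton, which forces any two maximal cliques of $G^{\ast}$ to share at most one vertex, i.e.\ $G^{\ast}$ is a block graph; reintroducing the twin classes recovers $G$, so $G$ is strictly chordal. I expect the main obstacle to be the lemma in the converse direction: a tempting alternative via the forbidden gem and dart fails because their local separators need not lift to separators of $G$, whereas the clique-tree argument uses disjointness directly, and the delicate point is confirming that $R_1\cap R_2$ is genuinely a minimal vertex separator distinct from $S$.
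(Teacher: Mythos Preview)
The paper does not prove this theorem; it is quoted from \cite{MW19} and stated without argument, so there is no in-paper proof to compare yours against.

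That said, your proposal is correct and is essentially the natural proof via clique-trees. In the forward direction the only point worth spelling out is why twin expansion cannot merge two maximal cliques of the underlying block graph or create new edge-separators; this follows because a true twin lies in exactly the same maximal cliques as the original vertex, so the clique-tree shape is preserved verbatim and each edge-separator $\{c\}$ becomes the twin class $T_c$. For the converse your key lemma is right, and the clique-tree walk is sound: since $v\in Q$ and $v\notin R$, some first edge $\{R_1,R_2\}$ along the $R$--$Q$ path has $v\notin R_1$, $v\in R_2$, while the subtree property for $u$ forces $u\in R_1\cap R_2$; this intersection is an \textit{mvs} by the clique-tree characterisation, is distinct from $S$ because it omits $v$, and meets $S$ in $u$, yielding the contradiction you want. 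The quotient step is also clean once you note that distinct maximal cliques of $G$ have distinct images in $G^{\ast}$ (again because true twins share all maximal cliques), so the clique-tree of $G$ descends to one of $G^{\ast}$ of identical shape with singleton edge-separators, which is precisely the block-graph condition. The one thing I would make fully explicit is that the lemma gives a \emph{full} twin class: you have shown both that the vertices of $S$ are pairwise twins and that every twin of a vertex of $S$ already lies in $S$, and it is the conjunction of these two facts that forces $S$ to be a single equivalence class.
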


The \emph{clique-bipartite graph}
of $G$  is the bipartite graph $CB(G) = (\mathbb{S} \cup \mathbb{Q}, F)$ in which there is an edge joining a maximal clique $Q\in \mathbb{Q}$ and a minimal separator  $S\in \mathbb{S}$ when  $S\subset Q$  \cite{M18}.
This structure is a generalization of the block-cut vertex graph defined by Harary \cite{H69}. 
Theorem \ref{theo:BC}  is an immediate consequence of  results about  clique-bipartite graphs presented in  \cite{M18}.

 \begin{theorem}\label{theo:BC} 
 Let $G$ be a  strictly chordal graph. Then  the clique-bipartite graph  $CB(G)$ is  a tree. 
\end{theorem}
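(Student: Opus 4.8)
The plan is to prove that $CB(G)$ is a tree by fixing a clique-tree $T = (\mathbb{Q}, E_T)$ of $G$, reading off the degree of each separator-node directly from $T$, and then verifying the two defining properties of a tree: connectivity together with the exact edge count $|F| = |\mathbb{S}| + |\mathbb{Q}| - 1$. The whole argument amounts to translating the combinatorial structure of $T$ into that of $CB(G)$, with the disjointness of minimal vertex separators (Theorem \ref{theo:caract2}) doing the essential work.

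First I would compute the degree of each separator-node. Fix $S \in \mathbb{S}$ and let $T_S$ be the subgraph of $T$ induced by the maximal cliques that contain $S$. Using the defining property of a clique-tree---that every clique on the path between $Q$ and $Q'$ contains $Q \cap Q'$---I would show $T_S$ is a connected subtree: if $Q, Q' \supseteq S$ then $Q \cap Q' \supseteq S$, so every clique on the $Q$--$Q'$ path contains $S$. Next, for any edge $\{Q, Q'\}$ of $T_S$ the intersection $Q \cap Q'$ is itself a minimal vertex separator with $S \subseteq Q \cap Q'$; here Theorem \ref{theo:caract2} forces $S = Q \cap Q'$, since two distinct minimal vertex separators must be disjoint while $S \neq \emptyset$. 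Hence the edges of $T_S$ are exactly the $\mu(S)$ occurrences of $S$ among the edges of $T$, so $T_S$ has $\mu(S)+1$ nodes. Since $S$ is adjacent in $CB(G)$ to precisely the cliques containing it, this gives $\deg_{CB(G)}(S) = \mu(S)+1$.

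From here the edge count is immediate. Summing over separator-nodes and using that $T$ has $|\mathbb{Q}|-1$ edges, each an occurrence of some separator, I get
$$|F| = \sum_{S \in \mathbb{S}} \big(\mu(S)+1\big) = \left(\sum_{S\in\mathbb{S}}\mu(S)\right) + |\mathbb{S}| = \big(|\mathbb{Q}|-1\big) + |\mathbb{S}|,$$
which is exactly $|\mathbb{S}| + |\mathbb{Q}| - 1$, one less than the number of vertices of $CB(G)$. For connectivity I would lift a path in $T$ to $CB(G)$: any clique-tree edge $\{R, R'\}$ with $R \cap R' = S$ yields the path $R$--$S$--$R'$ in $CB(G)$, so all clique-nodes lie in one component; and every separator-node has degree $\mu(S)+1 \geq 2$, hence attaches to that component. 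A connected graph on $|\mathbb{S}|+|\mathbb{Q}|$ vertices with $|\mathbb{S}|+|\mathbb{Q}|-1$ edges is a tree, which finishes the proof.

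The main obstacle is the degree computation, specifically the claim that $T_S$ has exactly $\mu(S)+1$ nodes. The subtlety is ruling out edges $\{Q,Q'\}$ of $T_S$ for which $S$ is a \emph{proper} subset of $Q \cap Q'$; this is precisely where strict chordality enters, through the pairwise disjointness of Theorem \ref{theo:caract2}, and it is what prevents a separator-node from picking up ``extra'' incident cliques that would create cycles. Everything else---the connectivity argument and the final counting---is routine once this local structure is pinned down.
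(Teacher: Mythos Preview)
Your argument is correct. The paper does not actually supply a proof here; it states that the theorem ``is an immediate consequence of results about clique-bipartite graphs presented in \cite{M18}'' and moves on. Your proposal, by contrast, is self-contained within the paper's own framework: it uses only the clique-tree, multiplicities, and the disjointness characterization of Theorem~\ref{theo:caract2}. The crucial step---showing that the induced subtree $T_S$ has exactly $\mu(S)$ edges because every edge $\{Q,Q'\}$ with $S\subseteq Q\cap Q'$ must in fact satisfy $S=Q\cap Q'$---is precisely where strict chordality enters, and it is also essentially a proof of Property~\ref{prop:sc}(d), which the paper later states without justification. So what you lose in brevity relative to the paper's citation you gain in transparency: your route makes explicit why pairwise disjointness of minimal vertex separators is exactly what prevents cycles in $CB(G)$.
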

 
The determination of the toughness of strictly chordal graphs, as seen in Theorem \ref{theo:touBD}, 
 can be performed in linear time complexity.

\begin{theorem} \label{theo:touBD} {\rm \cite{MW19} }
Let $G$ be a non-complete  strictly chordal graph and $\mathbb S$ be the set of minimal vertex separators of $G$.
Then   $\tau(G)=min_{S \in \mathbb S} \bigg\{  \frac{|S|}{\mu(S) +1}\bigg\}$.
\end{theorem}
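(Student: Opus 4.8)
The plan is to prove the two inequalities $\tau(G)\le \min_{S\in\mathbb S}\frac{|S|}{\mu(S)+1}$ and $\tau(G)\ge \min_{S\in\mathbb S}\frac{|S|}{\mu(S)+1}$ separately, with the whole argument resting on one structural fact: for every minimal vertex separator $S$ one has $\omega(G-S)=\mu(S)+1$. Since $G$ is not complete, $\mathbb S\neq\emptyset$ and each $S\in\mathbb S$ is a genuine separator with $\omega(G-S)\ge 2$, so $\frac{|S|}{\omega(G-S)}$ is one of the ratios over which $\tau(G)$ is minimized; once the structural fact is in hand the first inequality is immediate. The real work is the reverse inequality: no separator, minimal or not, can beat the best single minimal vertex separator.

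First I would pin down $\omega(G-S)=\mu(S)+1$. Fix a clique-tree $T=(\mathbb Q,E_T)$. The maximal cliques containing $S$ form a subtree of $T$, and by Theorem \ref{theo:caract2} (disjointness of the minimal vertex separators) every edge of that subtree carries the label $S$ exactly; thus it has $\mu(S)+1$ nodes joined by precisely the $\mu(S)$ edges labelled $S$. Deleting those edges breaks $T$ into $\mu(S)+1$ subtrees, each containing one clique $Q_i\supsetneq S$, and since $Q_i\setminus S\neq\emptyset$ each subtree still carries a vertex of $G-S$. By the defining property of a clique-tree every path in $G$ between vertices in cliques of different subtrees passes through a set containing $S$, so after deleting $S$ these $\mu(S)+1$ pieces are pairwise disconnected and cannot be split further. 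Hence $\omega(G-S)=\mu(S)+1$, which gives $\tau(G)\le\frac{|S|}{\mu(S)+1}$ for every $S$, and therefore $\tau(G)\le\min_{S\in\mathbb S}\frac{|S|}{\mu(S)+1}$.

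For the reverse inequality, let $W$ be an arbitrary separator and set $\mathbb S_W=\{S\in\mathbb S:S\subseteq W\}$. The main obstacle is the claim
$$\omega(G-W)\ \le\ 1+\sum_{S\in\mathbb S_W}\mu(S).$$
I would prove it with the same clique-tree: delete from $T$ every edge whose label belongs to $\mathbb S_W$, producing a forest with $1+\sum_{S\in\mathbb S_W}\mu(S)$ trees, and show that each connected component of $G-W$ is confined to a single tree of this forest. The crucial point — and where strict chordality re-enters through Theorem \ref{theo:caract2} — is that a separator can be ``crossed'' only when it is deleted in its entirety: if some vertex of an $S\in\mathbb S$ survives in $G-W$, then, because in each clique $Q\supset S$ the vertices of $Q\setminus S$ are adjacent to all of $S$, that surviving vertex reconnects the pieces $S$ would otherwise separate, and pairwise disjointness guarantees these effects do not interfere across distinct separators. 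Consequently a path of $G-W$ never leaves its forest tree, each component lies inside one tree, and the displayed bound follows.

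Finally I would combine the pieces by a charging argument. Since the separators in $\mathbb S_W$ are pairwise disjoint and contained in $W$, we have $|W|\ge\sum_{S\in\mathbb S_W}|S|$; and because $W$ is a separator it contains a minimal $uv$-separator, so $\mathbb S_W\neq\emptyset$. Writing $\lambda=\min_{S\in\mathbb S}\frac{|S|}{\mu(S)+1}$, so that $|S|\ge\lambda(\mu(S)+1)$ for every $S$, we obtain
$$|W|\ \ge\ \sum_{S\in\mathbb S_W}|S|\ \ge\ \lambda\sum_{S\in\mathbb S_W}\bigl(\mu(S)+1\bigr)\ \ge\ \lambda\Bigl(1+\sum_{S\in\mathbb S_W}\mu(S)\Bigr)\ \ge\ \lambda\,\omega(G-W),$$
using $|\mathbb S_W|\ge 1$ in the penultimate step and the displayed bound in the last. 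Thus $\frac{|W|}{\omega(G-W)}\ge\lambda$ for every separator $W$, giving $\tau(G)\ge\lambda$ and completing the proof. I expect the containment bound on $\omega(G-W)$ to be the only delicate step; the opening lemma and the closing inequality are routine once the clique-tree machinery and Theorem \ref{theo:caract2} are available.
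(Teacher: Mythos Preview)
The paper does not prove this theorem --- it is quoted from \cite{MW19} --- so there is no in-paper argument to compare against.  Your overall strategy is correct and the ingredients are the right ones, but the step that is supposed to yield $\omega(G-W)\le 1+\sum_{S\in\mathbb S_W}\mu(S)$ is wired backwards.

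Your ``surviving vertex reconnects'' observation is precisely what is needed: for every edge $\{Q,Q'\}$ of a forest tree $T_i$ the label $S'=Q\cap Q'$ is not in $\mathbb S_W$, so some $x\in S'\setminus W$ lies in both cliques, and since each $Q\setminus W$ is a clique of $G-W$, the set $\bigcup_{Q\in T_i}(Q\setminus W)$ is connected in $G-W$.  Hence each forest tree contributes \emph{at most one} component, and since every vertex of $G-W$ lies in some tree this gives the upper bound.  What you actually \emph{deduce}, however, is that ``a path of $G-W$ never leaves its forest tree'' and ``each component lies inside one tree''.  That statement neither follows from the reconnection argument (it is the opposite implication: same component $\Rightarrow$ same tree, whereas reconnection gives same tree $\Rightarrow$ same component) nor does it by itself imply the displayed inequality --- it only says that distinct non-empty trees yield distinct components, a lower bound on $\omega(G-W)$.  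Replacing your stated conclusion by ``each tree carries at most one component of $G-W$'' repairs the logical chain, after which the charging inequalities in your final paragraph go through exactly as written.
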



\section{Scattering number of strictly chordal graphs}

As already seen in Section \ref{section:introd},  the scattering number of $G$  is 
$$sc(G) = max \{  \omega(G - S) - |S|; S \subseteq V,  \omega(G - S) \neq1\}$$
where  $\omega(G - S) $ is  the number of connected components  of the graph $G-S$ and a  subset   $S\subset V$  for which this maximum is attained is 
a \emph{scattering set} of $G$. 

\smallskip 

Some basic properties of the separators of  strictly chordal graphs will be fundamental for 
the determination of their scattering numbers. 
They can be stated:

 \begin{property}  \label{prop:sc}  Let $G=(V,E)$ be  a strictly chordal graph and $\mathbb S$ its set of minimal vertex separators.
 \begin{enumerate}
 \item[a)] for any distinct $S, S^{\prime} \in {\mathbb S}$, $S \cap S^{\prime}= \emptyset$.
 
 \item[b)]  $S \in {\mathbb S}$  is a minimal separator of $G$.
 
 \item[c)] boundary cliques of $G$ contain only one mvs. 
 
  \item[d)] $\omega(G-S)=\mu(S)+1$, for every  $S \in {\mathbb S}$.
  
  \item[e)]  every separator $\mathbf{S}$ of $G$  is the  union of  pairwise disjoint minimal vertex separators  of $G$. 
  \end{enumerate}
 \end{property}
 
  The maximal cliques that contain $S \in {\mathbb S}$ are called {\em adjacent cliques of $S$}; the cardinality of this set is $\mu(S)+1$.
 The  set of  boundary cliques that contains the   {\em mvs}  $S\in {\mathbb S}$
 is denoted by  $B(S)$. 

In Theorem \ref{theo:specialS} it is proved  the first  result relating the minimal vertex separators  and the boundary cliques of $G$.
It will be used for the development of an efficient algorithm to determine a scattering set.

 \begin{theorem}  \label{theo:specialS} 
 Let $G$ be a   strictly chordal graph with $|\mathbb{S}| >1$. 
 Then  there is  a mvs     $S\in {\mathbb S}$ of $G$ such that $|B(S)|=\mu(S)$.
 \end{theorem}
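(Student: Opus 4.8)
The plan is to recast the entire statement in terms of the tree $CB(G)$ provided by Theorem~\ref{theo:BC}, and then locate the desired separator by a simple extremal (maximum-depth) argument. First I would record two dictionary facts. Since $CB(G)=(\mathbb S\cup\mathbb Q,F)$ is bipartite with an edge $\{S,Q\}$ precisely when $S\subset Q$, the clique-neighbours of a separator $S$ in $CB(G)$ are exactly its adjacent cliques, so $\deg_{CB(G)}(S)=\mu(S)+1$. In particular every separator has degree at least $2$ (a minimal vertex separator lies in at least two maximal cliques), so all leaves of the tree $CB(G)$ are maximal cliques.

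The key preliminary step is to identify the boundary cliques with the leaves of $CB(G)$. One inclusion is immediate: by item (c) of Property~\ref{prop:sc} a boundary clique contains a single \emph{mvs}, hence has degree $1$ in $CB(G)$ and is a leaf. For the converse I would argue through a clique-tree $T$ of $G$: if a maximal clique $Q$ is a leaf of $CB(G)$, then it contains a unique \emph{mvs} $S$, and since every edge of $T$ incident to $Q$ produces an \emph{mvs} equal to $Q\cap Q'\subseteq Q$, all those intersections equal $S$. Thus the vertices of $Q\setminus S$ lie in no other maximal clique and are simplicial, while the vertices of $S$ lie in at least two cliques and are non-simplicial; taking any $T$-neighbour $Q'$ gives $Q\cap Q'=S=\{\text{non-simplicial vertices of }Q\}$, so $Q$ is a boundary clique. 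Consequently $B(S)$ is exactly the set of leaf-cliques adjacent to $S$ in $CB(G)$, and no non-leaf neighbour of $S$ is a boundary clique.

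With this dictionary the theorem reduces to a statement about the tree: I must find a separator $S$ all of whose clique-neighbours are leaves except one. Because $|\mathbb S|>1$ and separators meet only cliques, the path in $CB(G)$ between two separators passes through a clique adjacent to two of its vertices, so an internal (degree $\geq 2$) clique $R$ exists; root $CB(G)$ at $R$. Now choose a separator $S$ of maximum depth. Every child of $S$ is a clique that can have no descendants, since a separator descendant would be deeper than $S$; hence each child is a leaf, i.e.\ a boundary clique, and there are $\deg_{CB(G)}(S)-1=\mu(S)$ of them. The only remaining neighbour of $S$ is its parent clique $P$, which has degree at least $2$ (it has the child $S$ together with its own parent, or it is the internal root $R$), so $P$ is not a leaf and not a boundary clique. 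Therefore $|B(S)|=\mu(S)$, as required. I expect the main obstacle to be the leaf-to-boundary direction of the dictionary step — verifying via the clique-tree that the part of a leaf clique outside its separator is genuinely simplicial — rather than the extremal argument, which is routine once the correspondence is established.
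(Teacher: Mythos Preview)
Your argument is correct and complete. The paper's proof also works through the tree $CB(G)$, but proceeds by contradiction rather than by an extremal choice: assuming no separator has $|B(S)|=\mu(S)$, every \emph{mvs} lies in at least two non-boundary cliques, and since every non-boundary clique contains at least two separators, one can keep walking to new vertices and force a cycle in $CB(G)$, contradicting Theorem~\ref{theo:BC}. Your maximum-depth argument is a direct, constructive variant of the same tree fact (a tree rooted at an internal vertex has a deepest internal vertex, all of whose children are leaves); it has the mild advantage of actually exhibiting the border separator. You also make explicit the converse direction ``leaf of $CB(G)\Rightarrow$ boundary clique'', which the paper's proof uses implicitly when asserting that non-boundary cliques contain at least two minimal vertex separators; your clique-tree justification of that step is the right one.
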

 
\begin{proof}
As $G$ is a   strictly chordal graph, every boundary clique in $G$ has only one {\em mvs}. 
Let $CB(G)$ be the clique-bipartite graph of $G$. 
The  boundary cliques of $G$ are the leaves of $CB(G)$ and for every \emph{mvs} $S\in \mathbb{S}$, 
the degree of the vertex that represents $S$ in $CB(G)$ is $\mu (S)+1$. 

Suppose that there is not a \emph{mvs} $S$ of $G$ such that $|B(S)|=\mu(S)$, i.e., 
every \emph{mvs}  of $G$ is a subset of  at least two  maximal cliques  that are  not  boundary cliques.  
However, the maximal cliques that are not boundary cliques have at least two minimal vertex separators. 
Since $G$  has a finite number of maximal  cliques, $CB(G)$ must  contain  a cycle. 
 Contradiction, by Theorem \ref{theo:BC}. 
Then  there is a {\em mvs}   $S$ of $G$ such that  $|B(S)|=\mu(S)$.\qed\end{proof}

\medskip

Any  \emph{mvs} $S\in {\mathbb S}$ described in Theorem \ref{theo:specialS}  is called a  \emph{border minimal vertex separator} of $G$.

From now on, the determination  of the scattering number and the scattering set  of strictly chordal graphs is addressed.
If  the graph  has only one minimal vertex separator, the result is immediate. 

 \begin{theorem}  Let $G$ be a non-complete strictly chordal graph.
  If $ |{\mathbb S}| =1$ then    $sc(G) = \mu(S)+1 -|S|$ with  $S\in \mathbb S$.  
\end{theorem}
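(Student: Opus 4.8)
The plan is to invoke the structural formula of Theorem \ref{theo:scformula} rather than to estimate $\omega(G-S')-|S'|$ over all vertex subsets directly. First I would observe that when $|\mathbb S|=1$ the single minimal vertex separator $S$ is, by item (b) of Property \ref{prop:sc}, a minimal separator; conversely every minimal separator is a minimal vertex separator, so $S$ is in fact the unique minimal separator of $G$ (which is non-complete, since it has a separator). Hence the maximisation in Theorem \ref{theo:scformula} ranges over the single set $S$, giving $sc(G)=\sum_{i=1}^{k}\max\{sc(G[C_i]),1\}-|S|$, where $C_1,\dots,C_k$ are the connected components of $G[V\setminus S]$.

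The key step is to show that each $G[C_i]$ is a complete graph, and I expect this implication to be the heart of the argument even though it is short. Suppose some component $C_i$ contained two non-adjacent vertices $u$ and $v$. Being in the same component of $G-S$, they are joined by a path avoiding $S$, so $S$ is not a $uv$-separator; on the other hand, since $u$ and $v$ are non-adjacent, $G$ possesses a minimal $uv$-separator $S''$, which is by definition a minimal vertex separator. Then $S''\neq S$, contradicting $|\mathbb S|=1$. Therefore every component of $G-S$ induces a complete subgraph, and everything that remains is bookkeeping.

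It remains to assemble the pieces. Since each $G[C_i]$ is complete it cannot be disconnected by deleting a proper subset of its vertices, so the only admissible deletion leaves the empty graph and $sc(G[C_i])<1$; hence $\max\{sc(G[C_i]),1\}=1$. The number $k$ of components equals $\omega(G-S)=\mu(S)+1$ by item (d) of Property \ref{prop:sc}. Substituting into the formula yields $sc(G)=k-|S|=\mu(S)+1-|S|$, as claimed. As an independent check one may argue directly: in any clique-tree of $G$ every edge is labelled by the unique minimal vertex separator $S$, so all maximal cliques contain $S$; consequently, if a candidate set $S'$ omits some vertex $s\in S$, then $s$ remains adjacent to every surviving vertex and $G-S'$ is connected, i.e.\ $\omega(G-S')=1$. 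Such $S'$ are excluded by the constraint $\omega(G-S')\neq1$, leaving $S'=S$ (or the trivial $S'=V$) as the only relevant choices, and $S'=S$ attains $\mu(S)+1-|S|$, in agreement with the formula-based computation.
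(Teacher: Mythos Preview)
Your argument is correct. The paper does not actually supply a proof of this theorem, stating only that ``the result is immediate''; the intended justification is presumably the direct one you sketch at the end, namely that every vertex of $S$ is universal (since, with $|\mathbb S|=1$, every edge of any clique-tree is labelled $S$ and hence every maximal clique contains $S$), so any cut set $S'$ with $\omega(G-S')\neq 1$ must contain $S$. Your primary route via Theorem~\ref{theo:scformula} is more elaborate than what the authors have in mind but entirely sound: the identification of $S$ as the unique minimal separator (using Property~\ref{prop:sc}(b) together with the general fact that every minimal separator is a minimal vertex separator), the completeness of each component $C_i$, and the resulting evaluation $\sum_i 1 - |S| = (\mu(S)+1)-|S|$ all check out.

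One minor imprecision in your independent check: it is not literally true that $S'=S$ and $S'=V$ are the \emph{only} admissible choices; any proper superset $S'\supseteq S$ may still give $\omega(G-S')\ge 2$. What you need is that for every such $S'$ one has $\omega(G-S')\le \mu(S)+1$ (each $C_i$ is complete, so removing further vertices from a component cannot split it) while $|S'|\ge|S|$, whence $\omega(G-S')-|S'|\le (\mu(S)+1)-|S|$ with equality at $S'=S$. This is an easy line to add and does not affect the validity of your main argument via Theorem~\ref{theo:scformula}.
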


In the remaining of this section  we  will consider graphs  with at least two minimal vertex separators and  our approach will be the analysis of 
the graph according to its  toughness. 


\subsection{graphs with {\boldmath{$\tau(G) \geq 1$}}}\label{subsection-greather}

\begin{theorem}\label{theo:tau>=1} 
Let $G$ be  a strictly chordal graph with  $|\mathbb{S}| >1$ and   $\tau(G) \geq 1$. 
 Then 
 $sc(G) = max_{_{S\in \mathbb S}}  \{ \mu(S) +1 -|S| \}$. 
 \end{theorem}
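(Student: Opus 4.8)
The plan is to apply the algorithmic formula of Theorem~\ref{theo:scformula} and show that, under the hypothesis $\tau(G)\ge 1$, the maximization over all minimal separators collapses to a maximization over the \emph{single} minimal vertex separators $S\in\mathbb S$. First I would recall that $\tau(G)\ge 1$ is equivalent to $sc(G)\le 0$ by Lemma~\ref{lem:touscat}, and more generally that every induced subgraph $G[C]$ arising as a component after removing a minimal separator is again a strictly chordal graph (this is inherited from the characterization in Theorem~\ref{theo:caract2}, since the minimal vertex separators of an induced connected subgraph form a subfamily of the pairwise-disjoint family $\mathbb S$). The key consequence is that each such component $G[C]$ also satisfies $\tau(G[C])\ge 1$, hence $sc(G[C])\le 0$, so that in Theorem~\ref{theo:scformula} every term $\max\{sc(G[C_i]),1\}$ equals exactly $1$.

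With that reduction in hand, the formula of Theorem~\ref{theo:scformula} becomes
$$sc(G)=\max_{_{S}}\Bigg\{\sum_{i=1}^{k}1-|S|\Bigg\}=\max_{_{S}}\{\,k-|S|\,\},$$
where $k=\omega(G[V\setminus S])$ and the maximum ranges over all minimal separators $S$ of $G$. By Property~\ref{prop:sc}(e), every minimal separator of $G$ is a disjoint union of minimal vertex separators, and by Property~\ref{prop:sc}(b) each single $S\in\mathbb S$ is itself already a minimal separator; moreover Property~\ref{prop:sc}(d) gives $\omega(G-S)=\mu(S)+1$ for a single mvs. Thus restricting the maximum to single minimal vertex separators yields the candidate value $\max_{S\in\mathbb S}\{\mu(S)+1-|S|\}$, which is a lower bound for $sc(G)$. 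It remains to prove the reverse inequality: that no \emph{composite} separator (a union of two or more pairwise-disjoint mvs) can do strictly better.

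The main obstacle, then, is the upper bound --- showing that splitting along several minimal vertex separators at once never increases $k-|S|$. The natural strategy is an exchange or additivity argument on the clique-bipartite tree $CB(G)$ of Theorem~\ref{theo:BC}. Removing a single mvs $S$ disconnects $CB(G)$ into $\mu(S)+1$ pieces, so the gain from using $S$ is $(\mu(S)+1)-|S|-1=\mu(S)-|S|$; I would show this gain is telescoping along the tree, so that the total gain of a composite separator $\mathbf S=S_1\cup\cdots\cup S_r$ is $\sum_j(\mu(S_j)-|S_j|)$ plus a correction accounting for shared incidences in the tree. Here the hypothesis $\tau(G)\ge 1$ enters decisively: by Theorem~\ref{theo:touBD} it forces $|S|\ge\mu(S)+1$ for every mvs, i.e. $\mu(S)-|S|\le -1<0$ for each $S\in\mathbb S$, so adding any further separator to an already-chosen one contributes a nonpositive increment to $k-|\mathbf S|$. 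Consequently the maximum of $k-|\mathbf S|$ is attained at a single mvs, and the two bounds meet.

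I would conclude by assembling these inequalities: the lower bound from testing single minimal vertex separators and the upper bound from the telescoping/nonpositivity argument together give $sc(G)=\max_{S\in\mathbb S}\{\mu(S)+1-|S|\}$. The most delicate point to get right is the bookkeeping of how many components a union of mvs creates relative to how many single-mvs removals it accounts for --- i.e. verifying that the component count is exactly additive (no extra merging or splitting beyond what the tree structure of $CB(G)$ predicts), which is precisely where the disjointness of Property~\ref{prop:sc}(a) and the tree property of $CB(G)$ must be combined carefully.
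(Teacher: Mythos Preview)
Your overall conclusion is right, but the argument is more circuitous than it needs to be and contains one genuine slip.  The paper's proof is entirely elementary: it picks the best single mvs $S$, giving $\alpha=\mu(S)+1-|S|\le 0$, and then observes that for any other $S'\in\mathbb S$ one has $\mu(S')+1-|S'|\le\alpha\le 0$, so adjoining $S'$ to $S$ can only decrease $\omega(G-\cdot)-|\cdot|$.  Your ``upper bound'' paragraph (the telescoping/nonpositivity exchange argument on $CB(G)$) is exactly this computation, so that part of your plan coincides with the paper's proof.

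Where you diverge is the detour through Theorem~\ref{theo:scformula}.  Two remarks on that.  First, you misread the range of the maximum: by Properties~\ref{prop:sc}(b) and~(e) together, the \emph{minimal} separators of a strictly chordal graph are precisely the single elements of $\mathbb S$ (any separator that is a union of $\ge 2$ disjoint mvs properly contains one of them, which is already a separator, so it is not minimal).  Hence once you have $\max\{sc(G[C_i]),1\}=1$, Theorem~\ref{theo:scformula} \emph{immediately} gives $sc(G)=\max_{S\in\mathbb S}\{\mu(S)+1-|S|\}$ and your whole ``composite separator'' analysis becomes redundant.  Second, the step ``each component $G[C]$ also satisfies $\tau(G[C])\ge 1$'' is the real content of the Theorem~\ref{theo:scformula} route, and you only gesture at it.  You would need to check not just that the mvs of $G[C]$ lie in $\mathbb S$, but that their \emph{multiplicities} in $G[C]$ do not exceed their multiplicities in $G$ (so that $|S'|\ge\mu_G(S')+1\ge\mu_{G[C]}(S')+1$); this follows from the tree structure of $CB(G)$, but it should be said.

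In short: your upper-bound argument is the paper's proof; the recursive approach via Theorem~\ref{theo:scformula} is a legitimate alternative that buys you nothing extra here, and to make it self-contained you should (i) identify the minimal separators with single mvs and (ii) justify the inheritance of $\tau\ge 1$ by tracking multiplicities in the components.
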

\begin{proof} As   $\tau(G) \geq 1$, for every \emph{mvs} $S' \in \mathbb S$, $|S'| \geq  \mu(S') +1$ and 
$sc (G) \leq 0$ (Lemma \ref{lem:touscat}).

Consider a \textit{mvs}  $S\in \mathbb S$ such that $\alpha = \mu(S) +1-|S|$ is the highest  possible; $\alpha \leq 0$. 
 Since the  scattering number is defined as the maximum, let us analyse a separator of $G$ that is  the union of $S$ and   other \emph{mvs} $S'\in \mathbb S$, $S'\neq S$.
For every \emph{mvs} $S' \in \mathbb S$, $ \mu(S') +1-|S'| \leq \alpha \leq  0$. 
So, $\omega(G- \{ S\cup S^{\prime}\})-|S \cup S'|<\omega(G- S)- |S|=\alpha$,  i.e., 
 any union  presents  an   worse result than the result obtained by $S$. 
Then,    $S$ is a  scattering set and   $sc(G) = max_{_{S \in \mathbb S }}  \{ \mu(S) +1 -|S| \}$.   \qed\end{proof}\\

Observe  that,  for a graph $G$, $ sc(G) = 0 $     if and only if  $\tau(G) = 1 $.
 
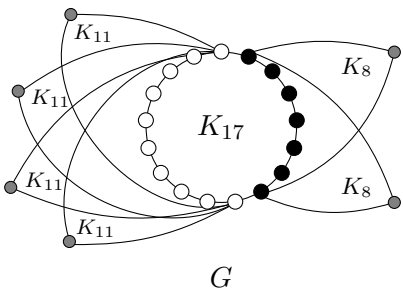
\begin{figure}[!h] 
\begin{center}
\begin{tikzpicture}
\coordinate (Oa) at (0.5,1.5);
\coordinate (Ob) at (-0.2,0.45);
\coordinate (Oc) at (-0.3,-0.8);
\coordinate (Od) at (0.5,-1.56);
\coordinate (Oe) at (4.8,1);
\coordinate (Of) at (4.8,-1);
\coordinate (A) at (2.5,1);
\coordinate (B) at (2.7,-1);
\coordinate (A1) at (2.9,1);
\coordinate (B1) at (3,-0.9);

\draw[] (Oa) to [bend left=15] (A);
\draw[] (Oa) to [bend right=68] (B);
\draw[] (Ob) to [bend left=25] (A);
\draw[] (Ob) to [bend right=55] (B);
\draw[] (Oc) to [bend left=30] (A);
\draw[] (Oc) to [bend right=20] (B);
\draw[] (Od) to [bend left=58] (A);
\draw[] (Od) to [bend right=15] (B);
\draw[] (Oe) to [bend right=15] (A1);
\draw[] (Oe) to [bend left=30] (B1);
\draw[] (Of) to [bend right=25] (A1);
\draw[] (Of) to [bend left=20] (B1);

\node at (0.8,1.25) [draw=none,fill=none] {\scriptsize{$K_{11}$}}; 
\node at (0.23,0.38) [draw=none,fill=none] {\scriptsize{$K_{11}$}}; 
\node at (0.15,-0.72) [draw=none,fill=none] {\scriptsize{$K_{11}$}}; 
\node at (0.83,-1.35) [draw=none,fill=none] {\scriptsize{$K_{11}$}}; 
\node at (4.3,0.8) [draw=none,fill=none] {\footnotesize{$K_{8}$}}; 
\node at (4.3,-0.8) [draw=none,fill=none] {\footnotesize{$K_{8}$}}; 
\node at (2.5,0) [draw=none,fill=none] {$K_{17}$}; 

\draw[black, fill=gray] (0.5,1.5) circle (0.8mm);
\draw[black, fill=gray] (-0.2,0.48) circle (0.8mm);
\draw[black, fill=gray] (-0.3,-0.8) circle (0.8mm);
\draw[black, fill=gray] (0.48,-1.52) circle (0.8mm);
\draw[black, fill=gray] (4.8,1) circle (0.8mm);
\draw[black, fill=gray] (4.8,-1) circle (0.8mm);
\drawvertices[num vertex=17, 
    circle radius=2,
    vertex radius=2pt,
    shift angle=90,
    at pos={(2.5,0)},
    circumference with labels in order={
    white,white,white,white,white,white,white,white,white,white,black,black,black,black,black,black,black}] {};     
  
    \node at (2.5,-2) [draw=none,fill=none] {$G$};  
\end{tikzpicture}  
\end{center}
\vskip-0.5cm
\caption{$\tau(G)\geq 1$}
\label{fig:tou>1}
\end{figure}

As it was seen in Theorems \ref{theo:touBD} and \ref{theo:tau>=1}, the determination of the toughness and the determination of the scattering number of a graph $G$
with  $\tau(G) \geq 1$ are quite similar.  
Observe that, for $\tau(G)>1$, a tough set is not always a scattering set;
an example is shown in Figure \ref{fig:tou>1}.
Graph $G$ has  $\tau(G)=2$,  its tough set is composed  by the white vertices, 
  $sc(G)=-4$ and the  scattering set is composed  by the black vertices.  


\subsection{graphs with  {\boldmath{$\tau(G) < 1$}}}\label{subsection:tau<1}

In Subsection \ref{subsection-greather} it was seen that  a graph $G$ with  $\tau(G) \geq 1$ have
all their minimal vertex separators such that  $|S| > \mu(S)$.
For graphs with $\tau(G) < 1$, the cardinality of 
the minimal vertex separators can be greater, equal or lower than their multiplicity;
however  it is mandatory to have at least one \emph{mvs} $S$ with $S \leq \mu(S)$.
In order to analyse this case we need to establish
a further partition of the set of graphs.
Let $G$ be a strictly chordal graph with $\tau(G)<1$  and $\mathbb S$ its set
of minimal vertex separators: 
graph $G$ is called a {\em type A graph} if $|S| \geq \mu(S)$, for every \emph{mvs}  $S \in \mathbb S$; 
otherwise it is called a {\em type B graph},  i.e., there exists at least one  \emph{mvs}   $S \in \mathbb{S}$
 such that $|S| < \mu(S)$. 

The determination of  the scattering number of type A graphs is quite simple as we can see in  the following theorem.

 \begin{theorem} \label{theo:sctype A}
 Let $G$ be a type A  graph.
 Then $sc(G) = 1$. 
 \end{theorem}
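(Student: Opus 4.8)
The plan is to show that for a type A graph $G$ (so $\tau(G)<1$ and $|S|\geq\mu(S)$ for every $S\in\mathbb S$), the scattering number equals exactly $1$, by establishing the two inequalities $sc(G)\geq 1$ and $sc(G)\leq 1$ separately. The lower bound should be the easy direction and the upper bound should carry the real work.

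For the lower bound $sc(G)\geq 1$, I would exhibit a single separator attaining value $1$. Since $\tau(G)<1$, there is by definition a separator $S$ (which by Property~\ref{prop:sc}(e) we may take to be a single \emph{mvs}) with $|S|/\omega(G-S)<1$, and by Property~\ref{prop:sc}(d) this reads $|S|<\mu(S)+1$, i.e.\ $|S|\leq\mu(S)$. Combined with the type~A hypothesis $|S|\geq\mu(S)$, this forces $|S|=\mu(S)$ for that separator. Then $\omega(G-S)-|S|=(\mu(S)+1)-\mu(S)=1$, so taking $S$ as the candidate scattering set gives $sc(G)\geq 1$.

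For the upper bound $sc(G)\leq 1$, the key point is to bound $\omega(G-\mathbf S)-|\mathbf S|$ over \emph{all} separators $\mathbf S$, not just single minimal vertex separators. By Property~\ref{prop:sc}(e) an arbitrary separator $\mathbf S$ is a disjoint union $S_1\cup\cdots\cup S_r$ of minimal vertex separators, so $|\mathbf S|=\sum_{i}|S_i|\geq\sum_i\mu(S_i)$ using the type~A bound. The plan is to control $\omega(G-\mathbf S)$ in terms of the same quantities, showing $\omega(G-\mathbf S)\leq 1+\sum_i\mu(S_i)$, from which $\omega(G-\mathbf S)-|\mathbf S|\leq 1$ follows immediately. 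I would argue this count structurally through the clique-bipartite tree $CB(G)$ of Theorem~\ref{theo:BC}: removing a single \emph{mvs} $S_i$ splits the tree (and the graph) into $\mu(S_i)+1$ pieces, so removing it increases the component count by $\mu(S_i)$; removing disjoint separators one at a time, each step raises the number of components by at most the multiplicity of that separator, giving the telescoping bound $\omega(G-\mathbf S)\leq 1+\sum_i\mu(S_i)$.

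The main obstacle is making the component-count bound $\omega(G-\mathbf S)\leq 1+\sum_i\mu(S_i)$ rigorous when several minimal vertex separators are removed simultaneously. One must verify that the disjointness guaranteed by Theorem~\ref{theo:caract2} prevents the separators from interacting in a way that creates extra components (that is, removing $S_j$ after $S_i$ cannot split a component into more than $\mu(S_j)+1$ pieces, even though the graph has already been cut). The cleanest way to handle this is to work entirely in the tree $CB(G)$, where each $S_i$ corresponds to an internal vertex of degree $\mu(S_i)+1$ and deleting a set of internal tree-vertices produces at most $1+\sum_i\bigl(\deg(S_i)-1\bigr)=1+\sum_i\mu(S_i)$ subtrees, each of which corresponds to a single connected component of $G-\mathbf S$. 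Once this tree-counting lemma is in place, combining it with the type~A cardinality bound closes the upper bound and, together with the lower bound, yields $sc(G)=1$.
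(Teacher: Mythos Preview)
Your proposal is correct. The lower bound is essentially the paper's argument: both locate an \emph{mvs} $S$ with $|S|=\mu(S)$ and obtain $\omega(G-S)-|S|=1$ (the paper additionally invokes Lemma~\ref{lem:touscat} to get $sc(G)\geq 1$ directly). One small remark: your reduction of the tough set to a \emph{single} \emph{mvs} should cite Theorem~\ref{theo:touBD} rather than Property~\ref{prop:sc}(e), since the latter only says that separators decompose into \emph{mvs}'s, not that the toughness is attained at one of them.

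Your upper bound, however, follows a genuinely different and cleaner route than the paper. The paper takes a union $\mathbf S=S'\cup S''$ of two \emph{mvs}'s, splits into two cases according to whether $\mathbf S$ happens to be a maximal clique, computes $\omega(G-\mathbf S)$ explicitly in each case, checks that $\omega(G-\mathbf S)-|\mathbf S|\leq 1$, and then extends to three or more \emph{mvs}'s ``by the same reasoning''. Your argument via the clique-bipartite tree $CB(G)$ handles all unions at once: since the \emph{mvs}-vertices form an independent set in the bipartite tree, deleting $r$ of them yields exactly $1+\sum_i(\deg(S_i)-1)=1+\sum_i\mu(S_i)$ subtrees, and each connected component of $G-\mathbf S$ sits inside one subtree, giving $\omega(G-\mathbf S)\leq 1+\sum_i\mu(S_i)\leq 1+|\mathbf S|$ immediately from the type~A hypothesis. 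This replaces the paper's case split and inductive sketch by a single counting identity, at the cost of appealing to the tree structure of $CB(G)$ (Theorem~\ref{theo:BC}); note only that a subtree may correspond to an \emph{empty} vertex set (when a clique is exactly the union of two removed separators), so the correspondence is an inequality, not a bijection---which is all you need.
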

    \begin{proof}  
Graph  $G$ is  a type A  graph,   $\tau(G) < 1 \therefore sc(G) >0 \therefore sc(G) \geq 1$ (Lemma \ref{lem:touscat}) and  for every  $S'\in \mathbb{S}$,  $|S'| \geq \mu(S')$. 
 As $\tau(G) < 1$, there is at least a  \emph{mvs} $S$ such that $ |S| =  \mu (S)$ and $\tau(G)= \frac{|S|}{|S|+1}$. So, 
 $\omega(G-S)-|S|=\mu(S) +1 -|S|=1$. 
  In fact, for every $S'\in \mathbb{S}$ such that  $|S'| = \mu(S')$, $\mu(S') +1 -|S'|=1$. 

Consider the separator  $\mathbf{S}=S' \cup S''$ with   $S', S'' \in \mathbb S$, $S'\neq S''$. 
 By Theorem \ref{theo:caract2},  $|\mathbf{S}|=|S'|+|S''|$. 
Firstly  let us determine the number of components of $G-\mathbf{S}$. 
 If $\mathbf{S}$ is a maximal clique of $G$, 
 $\omega(G-\mathbf{S})= \omega(G-S')+ \omega(G-S'')-2=\mu(S')+\mu(S'')$;  otherwise, $\omega(G-\mathbf{S})= \omega(G-S')+ \omega(G-S'')-1=\mu(S')+\mu(S'')-1$. 
 If  $|S'| >   \mu(S')$ or  $|S''| >   \mu(S'')$, it is immediate that  $\omega(G-\mathbf{S})-|\mathbf{S}| \leq 0$. 
If   $|S'|=  \mu(S')$ and  $|S''| =  \mu(S'')$,  
$\omega(G- \{ S\cup S^{\prime}\})-|\mathbf{S}|\leq 1$. 
Consider $\mathcal{S}$ is  a separator of $G$ with at least three minimal vertex separators of  $\mathbb{S}$  and such that for every $S'\subset   \mathcal{S}$, $ |S'|=\mu(S')$. 
 By the same reasoning,   
$\omega(G-\mathcal{S})-|\mathcal{S}  | \leq 1$.  
Then, $sc(G) = 1$.  \qed\end{proof}

\begin{corollary}\label{corol:tau=1A}
 Let $G$ be a type A  graph. Then every  mvs $S \in  \mathbb{S}$  such that  $|S| = \mu(S)$  is a scattering set.
  \end{corollary}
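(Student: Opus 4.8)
The plan is to verify directly that any such $S$ attains the maximum in the definition of the scattering number. Since Theorem~\ref{theo:sctype A} already establishes $sc(G) = 1$, it suffices to show that $\omega(G-S) - |S| = 1$ for every mvs $S \in \mathbb{S}$ with $|S| = \mu(S)$, together with the admissibility condition $\omega(G-S) \neq 1$ required for $S$ to be a candidate in the maximum.

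First I would invoke Property~\ref{prop:sc}(d), which gives $\omega(G-S) = \mu(S) + 1$ for every $S \in \mathbb{S}$. Substituting $|S| = \mu(S)$ yields
$$\omega(G-S) - |S| = \mu(S) + 1 - \mu(S) = 1.$$
Since $\mu(S) \geq 1$ for any minimal vertex separator, we have $\omega(G-S) = \mu(S) + 1 \geq 2$, so in particular $\omega(G-S) \neq 1$ and $S$ is a legitimate choice in the scattering-number maximum.

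Finally, combining $\omega(G-S) - |S| = 1$ with the value $sc(G) = 1$ from Theorem~\ref{theo:sctype A} shows that the maximum is attained at $S$; hence $S$ is a scattering set. There is no genuine obstacle here: the statement is an immediate consequence of the component count in Property~\ref{prop:sc}(d) combined with the scattering number already computed in the preceding theorem. The only point meriting explicit mention is the admissibility of $S$, i.e., the condition $\omega(G-S) \neq 1$, which follows automatically from $\mu(S) \geq 1$.
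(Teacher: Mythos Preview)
Your proposal is correct and follows essentially the same route as the paper: the corollary is stated without separate proof because the computation $\omega(G-S)-|S|=\mu(S)+1-|S|=1$ already appears inside the proof of Theorem~\ref{theo:sctype A}, and combining it with $sc(G)=1$ immediately yields the result. Your explicit check that $\omega(G-S)\geq 2$ (so that $S$ is an admissible candidate) is a clean addition but changes nothing substantive.
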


\begin{figure}[!h] 
\begin{center}
\begin{tikzpicture}
 \tikzstyle{vb}=[
draw,
fill=black,
circle,    inner sep=0pt, minimum width=5pt,
align=center]

\coordinate (Oa) at (2.17,1);
\coordinate (Oc) at (1.5,0);
\coordinate (Od) at (1.725,-0.55);
\coordinate (Oe) at (2.17,-1);
\coordinate (Og) at (3.4,-0.55);
\coordinate (Oh) at (3.5,0);
\coordinate (Oi) at (3.4,0.55);
\coordinate (Oj) at (2.83,1);
              
  \node [vb]  (a) at (2,1.75) {};   \node [vb]  (j) at (3,1.75) {}; 
  \node [vb]  (c) at (0.75,0) {};    \node [vb]  (h) at (4.25,0.5) {}; \node [vb]  (g) at (4.25,-0.5) {}; 
  \node [vb]  (d) at (1.5,-1.5) {}; 

\node at (2.5,0) [draw=none,fill=none] {$K_{10}$}; 
\node at (2.25,0.7) [draw=none,fill=none] {\footnotesize{$a$}}; 
\node at (2.8,0.7) [draw=none,fill=none] {\footnotesize{$j$}}; 
\node at (1.9,0.5) [draw=none,fill=none] {\footnotesize{$b$}}; 
\node at (1.75,0) [draw=none,fill=none] {\footnotesize{$c$}}; 
\node at (1.94,-0.45) [draw=none,fill=none] {\footnotesize{$d$}}; 
\node at (2.25,-0.7) [draw=none,fill=none] {\footnotesize{$e$}}; 
\node at (2.8,-0.7) [draw=none,fill=none] {\footnotesize{$f$}}; 
\node at (3.12,-0.45) [draw=none,fill=none] {\footnotesize{$g$}}; 
\node at (3.25,0) [draw=none,fill=none] {\footnotesize{$h$}}; 
\node at (3.12,0.5) [draw=none,fill=none] {\footnotesize{$i$}}; 
\drawvertices[num vertex=10, 
    circle radius=2,
    vertex radius=1.7pt,
    shift angle=90,
    at pos={(2.5,0)},
    circumference with labels in order={
    black,black,black,black,black,black,black,black,black,black}] {};
    
   \foreach \from/\to in {a/Oa,a/Oj,j/Oa,j/Oj,c/Oc,d/Od,d/Oe,h/Oh,h/Oi,h/Og,g/Oh,g/Oi,g/Og}  
    \draw (\from) -- (\to);  
    \node at (2.5,-2.1) [draw=none,fill=none] {$G_1$};  
    
  \node [vb]  (a) at (9,0) {}; 
 \node [vb] (b) at (8.5,0.5)  {}; 
 \node [vb] (c) at (9.5,0.5) {};    
   \node [vb] (d) at (9.5,-0.5) {};
   \node [vb] (e) at (8.5,-0.5) {}; 
  \node [vb] (f) at (7.7,0.5)  {};   
 \node [vb]  (g) at (8.5,1.2) {}; 
 \node [vb] (h) at (9.5,1.2) {}; 
   \node [vb] (i) at (10.3,0.5) {}; 
 \node [vb] (j) at (10.3,-0.5)  {}; 
\node [vb] (k) at (9.5,-1.2)  {}; 
 \node [vb]  (l) at (8.5,-1.2) {}; 
  \node [vb] (m) at (7.7,-0.5) {}; 

    \foreach \from/\to in {a/b,a/c,a/d,a/e,b/f,b/g,c/h,c/i,d/j,d/k,e/m,e/l}  
    \draw (\from) -- (\to);
\node at (8.7,0) [draw=none,fill=none] {\footnotesize{$k$}}; 
\node at (8.35,0.7) [draw=none,fill=none] {\footnotesize{$\ell$}}; 
\node at (9.75,0.7) [draw=none,fill=none] {\footnotesize{$m$}}; 
\node at (9.75,-0.7) [draw=none,fill=none] {\footnotesize{$n$}}; 
\node at (8.35,-0.7) [draw=none,fill=none] {\footnotesize{$o$}}; 
\node at (9,-2.1) [draw=none,fill=none] {$G_2$};  
\end{tikzpicture}  
\end{center}
\vskip-0.5cm
 \caption{$\tau(G_1), \tau(G_2)<1$}
\label{fig:t<1}
\end{figure}
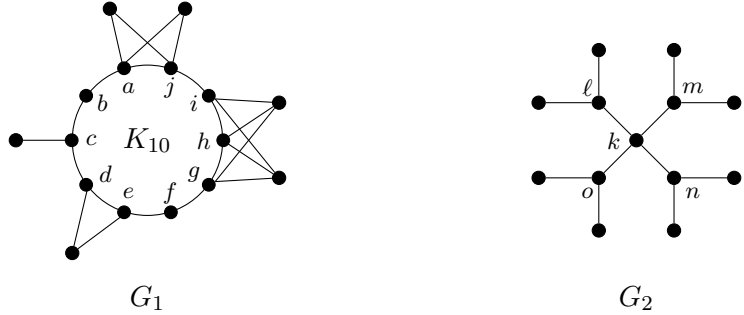

In Figure  \ref{fig:t<1},  graphs $G_1$ and $G_2$ have    toughness  less than $1$. 
Graph $G_1$ is  a  type A graph  with  $\tau(G_1) =.5$, $sc(G_1)=1$  
and three scattering sets $\{c\}$, $\{a,j\}$ and $\{a,c,j\}$. 
Graph $G_2$ is a type B graph with $\tau(G_2) =.25$, the scattering set is $\{\ell,m,n,o \}$ and $sc(G_2)=5$. 

In order to determine the scattering number  of a type B graph, 
Theorems \ref{theo:notborder},   \ref{theo:notborder2} and  \ref{theo:notborder3}  contain a detailed study on border minimal vertex separators.   
Theorem \ref{theo:notborder} provides a criterion to decide whether or not a \emph{mvs} should be considered as part of a scattering set of  strictly chordal graphs in general. Theorems  \ref{theo:notborder2}  and  \ref{theo:notborder3} present results to type B graphs.  

\begin{theorem} \label{theo:notborder}
Let $G$ be  a strictly chordal graph  with $|\mathbb{S}| >1$, $SC$ be a scattering set    
and   $S\in {\mathbb S}$ be a border mvs. 
If   $|S| < | B (S) |$ then   $SC\cup S $ is a scattering set of $G$. 
 \end{theorem}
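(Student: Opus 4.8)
The plan is to reduce to the case where $S$ is disjoint from $SC$ and then to count precisely how the deletion of $S$ fractures the one component of $G-SC$ that contains it. First I would record that, since $SC$ is a scattering set, the condition $\omega(G-SC)\neq 1$ forces $\omega(G-SC)\geq 2$, so $SC$ is a separator of $G$; by Property \ref{prop:sc}(e) it is a union of pairwise disjoint minimal vertex separators. As $S\in\mathbb S$ is itself an \emph{mvs}, Property \ref{prop:sc}(a) leaves only two possibilities: either $S\subseteq SC$, in which case $SC\cup S=SC$ is trivially a scattering set, or $S\cap SC=\emptyset$, which is the substantive case I would treat from here on.

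Assume $S\cap SC=\emptyset$. Since $S\in\mathbb S$ equals $Q\cap Q'$ for adjacent maximal cliques, it is contained in a clique and is therefore a clique, hence connected; being disjoint from $SC$, all of $S$ lies inside a single connected component $C$ of $G-SC$. Removing $S$ then affects only $C$, so $\omega\big(G-(SC\cup S)\big)=\omega(G-SC)-1+p$, where $p$ denotes the number of connected components of $G[C\setminus S]$. Substituting this identity into the objective value gives $\omega\big(G-(SC\cup S)\big)-|SC\cup S|=\big(\omega(G-SC)-|SC|\big)+\big(p-1-|S|\big)=sc(G)+\big(p-1-|S|\big)$, so everything reduces to a lower bound on $p$.

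The heart of the argument is the claim $p\geq |B(S)|=\mu(S)$, and this is the step I expect to be the main obstacle. Each boundary clique $Q\in B(S)$ contains $S$ as its only \emph{mvs} (Property \ref{prop:sc}(c)), so its simplicial vertices are exactly $Q\setminus S$. These vertices lie in no minimal vertex separator, hence are untouched by $SC$, and since they are adjacent to $S\subseteq C$ they belong to $C$; once $S$ is deleted they form a connected component of $G[C\setminus S]$ that is disjoint from, and nonadjacent to, every other such set. This produces $|B(S)|$ distinct components, giving $p\geq |B(S)|=\mu(S)$.

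Finally I would combine the pieces. The hypothesis $|S|<|B(S)|$ yields $p\geq\mu(S)=|B(S)|>|S|$, i.e. $p\geq |S|+1$, whence $\omega\big(G-(SC\cup S)\big)-|SC\cup S|=sc(G)+(p-1-|S|)\geq sc(G)$. On the other hand $\omega\big(G-(SC\cup S)\big)\geq p\geq 2\neq 1$ (as $\mu(S)>|S|\geq 1$ forces $\mu(S)\geq 2$), so $SC\cup S$ is an admissible set in the definition of the scattering number and its value cannot exceed $sc(G)$. The two inequalities force equality, so $SC\cup S$ attains $sc(G)$ and is a scattering set of $G$.
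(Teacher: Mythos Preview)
Your proof is correct and follows essentially the same approach as the paper: reduce to $S\cap SC=\emptyset$, locate $S$ in a single component $C$ of $G-SC$, and count the components that the boundary cliques of $S$ contribute after $S$ is removed. The paper splits into the two cases $B(S)=C_i$ versus $B(S)\subsetneq C_i$ and argues by contradiction with the maximality of $sc(G)$, whereas you bypass the case split with the single inequality $p\geq |B(S)|$; this is a cleaner packaging of the same idea and in fact sidesteps the paper's unjustified exact count of $|B(S)|+1$ components in its second case.
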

  
  \begin{proof}  
If $S\subseteq SC$ then  it is immediate that $SC\cup S$  is a scattering set.

Let $S \not\subseteq SC$.
Since  $SC$ is a scattering set of $G$, $\omega(G-SC)-|SC|=s$ is  maximum.
As $S$ is a border \emph{mvs}, $| B (S) |= \mu(S)$. 
Since  $|S| < | B (S) |= \mu(S)$  then  $\tau(G)<1$ and $sc(G)=s>0$.

Let   $G-SC$  be a  graph  with  $C_1,C_2,\dots,C_{\omega(G-SC)}$ connected components. 
Two cases  must be analysed. 

\begin{itemize}
\item  $B(S)=C_i, 1\leq i \leq \omega(G-SC)$.  
Consider the separator $SC\cup S $.
After the removal of this separator, the component $C_i$ does not exist anymore
and there are $|B(S)|$ new components.
The scattering number would become: 
$s'=\omega(G- \{SC \cup S\})-|SC \cup S|= s+|B(S)|-|S|-1. $

By hypothesis, $|S| < |B(S)|$; so, $|B(S)|-|S|-1\geq 0. $
But $s$ is maximum.
Then if $|S| =|B(S)|-1 $, $S$ can be added to $SC$;
if  $|S| < |B(S)|-1$, $s$ would not be maximum and $S$ must be already in $SC$, contradiction.

\item $B(S)\subset C_i, 1\leq i \leq \omega(G-SC)$.  Consider the separator $SC\cup S $.
After the  removal of this separator, the component $C_i$ 
does not exist anymore and there are $B(S)+1$ new components.  
The scattering number would become:
 $s'=\omega(G- \{SC \cup S\})-|SC \cup S|= s+|B(S)|-|S|.$

By hypothesis, $|S| < | B (S) |$. 
So, $s$ would not be  maximum and $S$ must be already in $SC$. Contradiction.
\end{itemize}
 Hence, in any case, $SC \cup S$ is a scattering set of $G$.
   \qed\end{proof}\\

We notice that  a strictly chordal graph  that  satisfies the hypothesis of Theorem \ref{theo:notborder}  is a type B graph. 
Theorem \ref{theo:notborder2} and \ref{theo:notborder3}  concludes the study of border minimal vertex separators for type B graphs. 
 
  \begin{theorem}\label{theo:notborder2}

Let $G$ be a type B graph  with $|\mathbb{S}| >1$ and let $S\in {\mathbb S}$ be a border mvs
with  $|S| > |B(S)|$.  Then  $S$ is not a subset of  any  scattering set of $G$.
\end{theorem}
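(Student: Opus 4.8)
```latex
The plan is to argue by contradiction, mirroring the structure of the proof of Theorem \ref{theo:notborder} but exploiting the reversed inequality $|S| > |B(S)|$. Suppose, for contradiction, that some scattering set $SC$ of $G$ contains the border \emph{mvs} $S$ with $|S| > |B(S)|=\mu(S)$. The idea is to show that \emph{removing} $S$ from $SC$ strictly increases the quantity $\omega(G-SC)-|SC|$, contradicting the maximality of $SC$. Concretely, I would set $SC' = SC \setminus S$ and compare $\omega(G-SC')-|SC'|$ against $\omega(G-SC)-|SC|$.

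First I would account for the cardinality change: by Property \ref{prop:sc}(a) and (e), the minimal vertex separators comprising $SC$ are pairwise disjoint, so deleting $S$ reduces the separator size by exactly $|S|$, giving $|SC'| = |SC| - |S|$. The main work is then to track how the component count changes when $S$ is returned to the graph. Since $S$ is a border \emph{mvs}, its $\mu(S)+1$ adjacent cliques consist of exactly $\mu(S)=|B(S)|$ boundary cliques together with one non-boundary clique. When $S$ is present in the separator, each of the $|B(S)|$ boundary cliques (minus $S$) forms its own component, contributing $|B(S)|$ components; reinserting $S$ merges these boundary-clique remnants back together with the adjacent non-boundary portion. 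I would argue that the net effect is to decrease the number of components by at least $|B(S)|$ — that is, $\omega(G-SC') \geq \omega(G-SC) - |B(S)|$ (and the worst case, losing exactly $|B(S)|$ components, is the relevant bound). The analysis naturally splits into the same two cases as in Theorem \ref{theo:notborder}, according to whether the boundary cliques of $S$ lie in a single component of $G - SC'$ or remain split.

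Combining these, $\omega(G-SC')-|SC'| \geq \bigl(\omega(G-SC)-|B(S)|\bigr) - \bigl(|SC|-|S|\bigr) = \bigl(\omega(G-SC)-|SC|\bigr) + \bigl(|S|-|B(S)|\bigr)$. Since $|S| > |B(S)|$, the term $|S|-|B(S)| \geq 1 > 0$, so $\omega(G-SC')-|SC'| > \omega(G-SC)-|SC|$, contradicting that $SC$ is a scattering set. Hence no scattering set can contain $S$.

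The step I expect to be the main obstacle is the precise component-counting when $S$ is removed from the separator, specifically verifying that $\omega$ drops by \emph{at least} $|B(S)|$ and handling the boundary case where the adjacent non-boundary clique is itself fully contained in $SC$. One must be careful that reinserting $S$ genuinely reconnects the boundary-clique components through a shared adjacent clique rather than accidentally leaving them isolated via some other separator in $SC$; here Property \ref{prop:sc}(a) (disjointness of distinct \emph{mvs}) and Property \ref{prop:sc}(d) ($\omega(G-S)=\mu(S)+1$) are the tools that guarantee the clean merging, since no other separator in $SC$ can overlap $S$ and interfere with the reconnection. Getting the inequality direction and the off-by-one component bookkeeping exactly right, across both cases, is where the genuine care is needed.
```
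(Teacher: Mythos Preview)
Your approach is correct and is essentially the paper's argument: the paper considers any separator $\mathbf{S}$ not containing $S$ and asserts (as ``immediate'' from $|S|>\mu(S)=|B(S)|$) that adjoining $S$ strictly decreases $\omega(G-\,\cdot\,)-|\cdot|$, which is exactly your removal argument read in reverse with $\mathbf{S}=SC'$. The one point you should make explicit is that $SC'=SC\setminus S$ is still admissible in the scattering maximum; since $G$ being type~B forces $sc(G)\geq 2$, your inequality yields $\omega(G-SC') > |SC'|+2$, whence $SC'\neq\emptyset$ (otherwise $\omega(G)>2$, contradicting connectedness) and $\omega(G-SC')\geq 2$.
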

 
\begin{proof}
As $G$ is type B graph, $\tau(G)<1$. 
By Lemma \ref{lem:touscat}, $ sc(G)\geq 1$ and,  in addition,   $sc(G)\neq 1\therefore sc(G)\geq 2$.  

By hypothesis,   $|S| >  |B(S)|=\mu(S)$. So, $|S| =  \mu(S) +k$, $k\geq 1$. 
Consider $\mathbf S$  a separator of $G$  with  $\omega(G- \mathbf{S})-|\mathbf{S}|\geq 2$ such that $S \not\subset \mathbf S$. 
It is  immediate that $\omega(G- (\mathbf{S} \cup S))-|\mathbf{S} \cup S| <  \omega(G- \mathbf{S})-|\mathbf{S}|$.
 In particular, if $\mathbf S$ is a scattering set, $S$  is not a subset of $\mathbf S$.  
 \qed
\end{proof}
 
 \begin{theorem}\label{theo:notborder3}
Let $G$ be a type B graph  with $|\mathbb{S}| >1$, $S\in {\mathbb S}$ be a border mvs with $|S| = |B(S)|$. 
Then $sc(G)=sc(G')$ where $G'=G-(B(S)\setminus S)$. 
\end{theorem}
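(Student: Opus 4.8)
The plan is to establish the two inequalities $sc(G)\ge sc(G')$ and $sc(G')\ge sc(G)$ separately, after fixing notation for the local structure around $S$. Let $B(S)=\{Q_1,\dots,Q_{\mu(S)}\}$ be the $\mu(S)$ boundary cliques containing $S$, and let $Q_0$ be the unique adjacent clique of $S$ that is \emph{not} a boundary clique (it exists since $S$ has $\mu(S)+1$ adjacent cliques). Writing $D_i=Q_i\setminus S$, the deleted set is $B(S)\setminus S=D_1\cup\dots\cup D_{\mu(S)}$. Since $S$ is a minimal separator (Property~\ref{prop:sc}(b)) with $\omega(G-S)=\mu(S)+1$ (Property~\ref{prop:sc}(d)), the graph $G-S$ splits into the $\mu(S)$ components $D_1,\dots,D_{\mu(S)}$ and one further component $E\supseteq Q_0\setminus S$; hence $G'=G[S\cup E]$. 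First I would record three facts: in $G'$ the set $S$ sits inside the single maximal clique $Q_0$ and is therefore no longer a separator, whence $\mathbb S(G')=\mathbb S\setminus\{S\}$ and $G'$ is a non-complete strictly chordal graph (as $|\mathbb S|>1$); each vertex of $D_i$ is simplicial in $Q_i=S\cup D_i$; and $S$ meets $E$ only in $Q_0\setminus S$.

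For $sc(G)\ge sc(G')$ I would start from an optimal scattering set $SC'$ of $G'$ with $\omega(G'-SC')\ge 2$. As a separator of the strictly chordal graph $G'$ it is a union of pairwise disjoint mvs's (Property~\ref{prop:sc}(e)), none of which is $S$; by disjointness (Theorem~\ref{theo:caract2}) we get $SC'\cap S=\emptyset$, so $SC'\subseteq E$. Passing from $G'-SC'$ back to $G-SC'$ only reattaches the simplicial cliques $D_i$ to the surviving set $S$, so they all join the one component of $G-SC'$ that contains $S$, while every other component is unchanged. Thus $\omega(G-SC')=\omega(G'-SC')$ with identical separator size, and $sc(G)\ge\omega(G-SC')-|SC'|=sc(G')$.

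For $sc(G')\ge sc(G)$ I would take an optimal scattering set $SC$ of $G$ and first reduce to the case $S\cap SC=\emptyset$. Being a union of disjoint mvs's, $SC$ either avoids $S$ or contains $S$ entirely; in the latter case write $SC=S\cup R$ with $R\subseteq E$ a union of the other mvs's. Deleting $S$ and then $R$ gives $\omega(G-SC)=\mu(S)+\omega(G[E]-R)$, so $|S|=\mu(S)$ yields $sc(G)=\omega(G[E]-R)-|R|$. Now compare with the separator $R$ alone: reinstating $S$ links the $D_i$ either to the single component of $G[E]-R$ that meets $Q_0\setminus S$, when $(Q_0\setminus S)\setminus R\ne\emptyset$, or to nothing otherwise. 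The first case gives $\omega(G-R)=\omega(G[E]-R)$, so $R$ is again an optimal scattering set that avoids $S$; the second would give $\omega(G-R)-|R|=sc(G)+1$, contradicting maximality. Hence an optimal scattering set contained in $E\subseteq V(G')$ exists, and the pendant argument of the previous paragraph shows $\omega(G'-SC)=\omega(G-SC)$, so that $sc(G')\ge sc(G)$.

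I expect the only delicate step to be exactly this last reduction when $S\subseteq SC$: the hypothesis $|S|=|B(S)|=\mu(S)$ is what makes deleting $S$ from a separator scattering-neutral, and maximality is what discards the degenerate alternative in which dropping $S$ would strand the boundary cliques as a spurious extra component. The same remark rules out $R=\emptyset$, since $SC=S$ would force $sc(G)=\mu(S)+1-|S|=1$, impossible for a type~B graph, where $sc(G)\ge 2$. Combining the two inequalities yields $sc(G)=sc(G')$.
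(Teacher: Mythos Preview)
Your proof is correct and, in fact, more carefully argued than the paper's own. Both proofs exploit the same structural facts (Property~\ref{prop:sc} and Theorem~\ref{theo:caract2}) to localise the effect of the border separator $S$, but the executions differ. The paper starts from a scattering set $SC'$ of $G'$, observes that $S\not\subset SC'$, and computes the value of $SC'\cup S$ in $G$ (obtaining $s$ or $s-1$ according to whether $Q_0\subseteq SC'\cup S$); it then asserts that $SC'$ is a scattering set of $G$, without explicitly checking that no separator of $G$ can beat~$s$. You, by contrast, prove both inequalities separately: the direction $sc(G)\ge sc(G')$ is exactly the pendant argument the paper is gesturing at, and for $sc(G')\ge sc(G)$ you explicitly reduce an optimal scattering set of $G$ to one avoiding $S$, using the hypothesis $|S|=\mu(S)$ to show that deleting $S$ from the separator is scattering-neutral and invoking maximality to rule out the degenerate case $(Q_0\setminus S)\subseteq R$. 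This second half is the piece the paper leaves implicit; your treatment of it (including the side check that $R=\emptyset$ would force $sc(G)=1$, impossible for type~B) is the cleaner argument.
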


\begin{proof}
As $G$ is type B graph, by the definition, there is a \emph{mvs} $S'\in {\mathbb S}$  such that $|S'|< \mu(S')$. 
As $S$ is a border \emph{mvs} with  $|S|=|B(S)|$, it follows that  $|B(S)|=\mu(S)$ and  $S\neq S'$. 

The set $B(S)\setminus S$ consists of simplicial vertices of $G$. 
The graph $G'$ is obtained by removing these vertices. 
Therefore,  $S$ is a set of simplicial vertices of $G'$, $S'$ is a \emph{mvs} of $G'$ 
and $G'$ is also a type B graph with $sc(G')=s\geq 2$.   

Consider $SC'$ a scattering set of $G'$. It is  immediate that $S \not \subset SC'$. 
Let us analyse the separator $SC'\cup S$ of $G$. 
If there is a maximal clique $Q$ of $G$ such that $S\subset Q$ and $Q\subset SC' \cup S$, 
$\omega(G-\{ SC'\cup S \})-|SC' \cup S|=\omega(G-SC')-  |SC'|+ \omega(G- S)-|S|-2=s-1$.
Otherwise, $\omega(G-\{ SC'\cup S \})-|SC' \cup S|=\omega(G-SC')-  |SC'|+ \omega(G- S)-|S|-1=s$.
As $G'=G-(B(S)\setminus S)$, $SC'$ is also a scattering set of $G$. 
Then, $sc(G)=sc(G')$. 
\qed
\end{proof}



\smallskip

For the determination of the scattering number of a type B graph, 
firstly a scattering set of the graph must be determined;
the algorithm that performs this task consists of the application of Theorems   \ref{theo:notborder}, 
\ref{theo:notborder2} and \ref{theo:notborder3}.
So, after the computation of this set, it must be removed from the original graph 
and the remaining components accounted for, i.e., the scattering number is determined.

An intuitive algorithm can be drafted: we search graph $G$ looking for a
border \emph{mvs} $S$, that always exists (Theorem \ref{theo:specialS}).
If $|S| \geq |B(S)|$, $S$ can be ignored:  the vertices of $B(S)\setminus S$ must be removed from $G$ 
and the vertices of $S$ become simplicial vertices in the updated graph $G$.
If $|S| < |B(S)|$, the vertices of $S$  must be included  in the scattering set of $G$
and the vertices of $B(S)$ removed from the graph.
This step is  repeated until all minimal vertex separators are covered.
For each border \emph{mvs}, we must go through the updated graph again.
This algorithm has $O(nm)$ time complexity  since in a strictly chordal graph there are $O(n)$ 
minimal vertex separators and, for each one, the entire graph must be searched.
However, a  more efficient implementation can be presented.

\begin{figure}[h]
\hrule
\begin{myalgorithm}
\Algoritmo \em Scattering-set determination;  \\
{\bf Input:} $CB(G)$; \\
{\bf Output:}  $scattering$-$set(G)$;\\
\Inicio \\
\> Initialize arrays $card, status, entry$;\\
\> $SC, scattering$-$set(G) \gets \emptyset$;\\
\> $entry\_order \gets 0;$ \\
\> $root\gets v\in V$ such that $ status(v)=mvs$; $parent(root) = NULL$; \\ 
\> $dfs(root)$;\\
\> \Se $card(root) < \mu(root)$ \entao\\
\>\> $SC \gets SC \cup \{v\}$;  \\
\> \Para $v \in SC$ \faca  \hskip 1cm
 \% let $S$ be the $mvs$ represented by $v$ \\
\>\> $scattering$-$set(G) \gets scattering$-$set(G) \cup S$;\\
\> \\
\> \Procedimento $dfs(v)$; \\
\> \Inicio\\
\>\> $entry(v) \gets entry\_order \gets entry\_order +1$; \\
\>\> \Para $w \in Adj(v)$ \faca \\
\>\>\> \Se $entry(w) = 0$ \entao\\
\>\>\>\> $parent(w) \gets v$;\\
\>\>\>\> $dfs(w)$; \\
\>\> \Se $status(v) \neq mvs$ \entao               \hskip 6cm (*)\\                               
\>\>\> \Se $status(v) = false\_clique$ \entao\\
\>\>\>\> $\mu(parent(v)) \gets \mu(parent(v)) -1$;\\
\>\> \senao \Se $v \neq root$  \entao      \\
\>\>\> \Se $card(v) < \mu(v)$ \entao \\
\>\>\>\> $SC \gets SC \cup \{v\}$;  \\
\>\>\>\> \Se $card(parent(v)) = card(v) + card(parent(parent(v))$ \entao \hskip 0.4cm (**) \\
\>\>\>\>\> $status(parent(v)) \gets  false\_clique$ \\
\>\>\>\> \senao $card(parent(v)) \gets card(parent(v)) - card(v);$ \\
\> \Fim\\
\Fim.
\end{myalgorithm}
\hrule
\end{figure}

The algorithm {\em Scattering-set determination}, presented here,
relies on a depth-first search over the clique-bipartite graph $CB(G)=({\mathbb S} \cup {\mathbb Q}, F)$, 
which is a tree (Theorem \ref{theo:BC}).
 It is immediate to see that, although  the algorithm performs on the tree, 
it works at the same time with the structures that are represented by the vertices of the tree.
The search must begin in a vertex that represents a \emph{mvs} of $G$;  
all leaves of the depth-first search tree are vertices representing maximal cliques.

At each step of the algorithm, a vertex representing a maximal clique or a \emph{mvs} is considered.
They  take turns in the tree, since $CB(G)$ is a bipartite graph.
They are analysed at the moment that the vertex comes out from the recursion stack.
Observe that at this point all the action over its descendants is already performed, that is, 
the subgraph induced by them is already updated.

 The algorithm maintains labels for the vertices of $CB(G)$: 

$-\, entry(v)$: the order in which the vertex is visited;

$-\, parent(v)$: the parent of the vertex in the depth-first search tree;

$-\, card(v)$: the cardinality of the maximal clique or 
 the \emph{mvs} represented by  $v$;
 
$-\, \mu(v)$: equal to $\mu(S)$, being $S \in \mathbb S$ represented by $v$;
 
\smallskip 
  
$-\, status(v) = \left\{   \begin{array}{c l}
                          mvs,                                & v \mbox{   represents a } mvs \\
                          true\_clique,  &  v \mbox{ represents an existing clique }\\  
                                                & \,\,\,\,\mbox{  in the updated graph} \\
                          false\_clique ,   & v \mbox{ represents a clique that does not  }\\
                                                & \,\,\,\,\mbox{   exist anymore in the updated graph}\\
                        \end{array}
                    \right.  $

\medskip

 Let $v$ be the vertex to be analysed. 
\begin{itemize} 
\item  {\em If $v$  represents a maximal clique}:
  
If $v$ is a leaf of $CB(G)$, nothing happens. 
Otherwise, its status must be addressed; if it is  $false\_clique$ 
the number of adjacent cliques of $parent(v)$, which represents a $mvs$, $\mu(parent(v))$  must be decreased. 
Observe that this computation simulates that all the subgraph
induced by  $v$ and its descendants, i.e, the subgraph represented by these vertices on the graph, 
does not belong anymore to the graph.

\item {\em If $v$  represents a mvs $S$}:
 
 At the moment that vertex $v$ is analysed, it corresponds to a \emph{mvs} of the updated graph $G$.
 Its descendants in the depth search tree represent boundary cliques
 of $G$  or a subgraph of $G$ that does not contain any \emph{mvs} that must be in the scattering set.
 So, the \emph{mvs} can be considered as a border \emph{mvs} and
 Theorems  \ref{theo:notborder}, \ref{theo:notborder2} and \ref{theo:notborder3}
 can be applied.
We must consider two cases.
Firstly, if $|S| \geq  \mu(S)$:  the $mvs$ $S$ does not need to be included in the scattering set of $G$ and 
it is no more considered as a separator; nothing is done.
 Its vertices will be treated as simplicial vertices in the next iteration.
In the second case,  when  $|S| < \mu(S)$, the vertices of $S$ must be added to the scattering set of $G$
 and removed from the graph.
The graph must be updated:  the maximal clique  $Q$ that contains the $mvs$ and
that remains in the graph is analysed: 
if $Q$ is composed only by two minimal vertex separators, then $Q$, as a maximal clique, will not
exist anymore in the graph.
This computation is implemented in the algorithm by testing
 $card(parent(v))$ and, if needed, updating the label $status(parent(v))$ (line (**) of the algorithm).
 \end{itemize}

The algorithm determines $SC$, a set of vertices of $CB(G)$, each one representing a $mvs$  of $G$.
The scattering set of $G$ is the result of the union of these minimal vertex separators.

\subsection{time complexity of the determination of the scattering number}  

\begin{theorem}
The determination of the scattering number of a strictly chordal graph $G$  has linear time complexity.
\end{theorem}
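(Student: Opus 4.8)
The plan is to prove the linear bound by following the same three-way case split that organizes the whole section, and to show that in each case the work reduces either to a single pass over the minimal vertex separators or to one depth-first search over a tree of linear size. The correctness of the outputs has already been established by the preceding theorems; here I only need to account for running time.

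First I would carry out the common preprocessing. By \cite{MP10}, the set $\mathbb S$ of minimal vertex separators, together with each multiplicity $\mu(S)$ and each cardinality $|S|$, can be obtained in linear time; from these numbers the toughness $\tau(G)=\min_{S\in\mathbb S}\{|S|/(\mu(S)+1)\}$ (Theorem \ref{theo:touBD}) is computed by one scan of $\mathbb S$. Comparing $\tau(G)$ with $1$, and then testing whether every $S$ satisfies $|S|\ge\mu(S)$, classifies $G$ as a $\tau(G)\ge1$ graph, a type A graph, or a type B graph, again in a single pass. I would also build the clique-bipartite graph $CB(G)$ at this stage. I would then dispatch the two easy cases: if $\tau(G)\ge1$, Theorem \ref{theo:tau>=1} gives $sc(G)=\max_{S\in\mathbb S}\{\mu(S)+1-|S|\}$, evaluated by one scan of $\mathbb S$, hence $O(n)$; if $G$ is type A, Theorem \ref{theo:sctype A} gives $sc(G)=1$, reported in constant time once the classification is known.

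For type B I would analyze the algorithm \emph{Scattering-set determination} directly. The key structural fact is that $CB(G)$ is a tree (Theorem \ref{theo:BC}); since a chordal graph on $n$ vertices has at most $n$ maximal cliques and at most $n-1$ minimal vertex separators, $CB(G)$ has $O(n)$ vertices and hence $O(n)$ edges. The procedure is a single depth-first search over this tree, so each vertex and edge is traversed a constant number of times. All work done when a vertex leaves the recursion stack --- the comparisons $card(v)<\mu(v)$ and the test at line (**), the decrement of $\mu(parent(v))$, and the relabeling $status(parent(v))$ --- uses only the precomputed integer labels and is therefore $O(1)$ per vertex. Finally, the returned scattering set is the union of the chosen minimal vertex separators, which are pairwise disjoint (Theorem \ref{theo:caract2}), so its total size and the subsequent component count are $O(n)$. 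Summing over all vertices gives $O(n)$ for the type B case as well, and the overall bound follows.

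The main obstacle is the type B analysis, and within it the crucial point is establishing that $CB(G)$ has linear size and can be built in linear time: once that is in hand, the constant per-vertex cost of the traversal makes the total bound immediate. I would therefore spend most of the argument justifying the $O(n)$ bounds on $|\mathbb Q|$ and $|\mathbb S|$ and confirming that every label update performed during the recursion touches only $O(1)$ data, so that the naive $O(nm)$ implementation described earlier --- which re-searches the graph once per border separator --- is genuinely replaced by a single linear-time traversal of the tree $CB(G)$.
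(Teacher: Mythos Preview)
Your proposal is correct and follows essentially the same route as the paper: the same three-way case split, the same references to Theorems \ref{theo:tau>=1}, \ref{theo:sctype A}, \ref{theo:BC} and \ref{theo:caract2}, and the same argument that $CB(G)$ has $O(n)$ vertices and edges so a single depth-first search with $O(1)$ work per vertex suffices. The only cosmetic difference is that the paper states the overall bound as $O(n+m)$ rather than $O(n)$, since the preprocessing step from \cite{MP10} reads the original graph $G$.
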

\begin{proof}
Let $G=(V,E)$ be a strictly chordal graph. 
The set of maximal cliques, the set of minimal vertex separators of $G$ and their multiplicities must be determined;
 this can be accomplished in linear time complexity \cite{MP10}.
The determination of the toughness of the graph depends on a traversal of $\mathbb S$ that 
has $O(n+m)$ time complexity.

If $\tau(G) \geq 1$, by Theorem \ref{theo:tau>=1}, we must determine  a \emph{mvs} that satistisfies 
 $max_{_{S\in \mathbb S}}  \{ \mu(S) +1 -|S| \}$. 
As the minimal vertex separators and their multiplicities are already known, a new traversal of $\mathbb S$
is needed.
It has $O(n+m)$ time complexity.

If $\tau(G) < 1$ the partition of the set of graphs  must be establish.
In order to perform it, the  analysis of each \emph{mvs} is, again, necessary.
If  $G$ is a type A graph, the determination of the scattering number has constant time complexity,
by Theorem \ref{theo:sctype A}.
If  $G$ is a type B graph, the algorithm {\em Scattering-set determination} must be performed.
As seen, the algorithm relies on a depth-first search that has linear time complexity.
As $G$ is a strictly chordal graph, we know that 
for any distinct $S, S^{\prime} \in {\mathbb S}$, $S \cap S^{\prime}= \emptyset$ (Theorem \ref{theo:caract2}).
So, a vertex $v$ is a simplicial vertex or it belongs exactly to one $mvs$.
By labeling the vertices of  maximal cliques, 
it is possible to built $CB(G)$ in linear time complexity.
The vertex set of graph $CB(G)$ is ${\mathbb Q} \cup {\mathbb S}$.
The sets ${\mathbb Q}$  and ${\mathbb S}$ have each one at most $n$ elements; 
so, ${\mathbb Q} \cup {\mathbb S}$ has also $O(n)$ elements.
As $CB(G)$ is a tree it has at most $2n -1$ edges.
The if statement marked (*)  in the algorithm 
takes constant time for its execution.
So,  the building of $CB(G)$ and the depth-first search have $O(n+m)$ time complexity.
Hence the theorem is proved.
 \qed
\end{proof}


\end{document}